\newtheorem{theorem}{Theorem}
\newtheorem*{theorem*}{Theorem}
\newtheorem{corollary}[theorem]{Corollary}
\newtheorem{hypothesis}[theorem]{Hypothesis}
\newtheorem{proposition}[theorem]{Proposition}
\newtheorem{lemma}[theorem]{Lemma}
\theoremstyle{definition}
\theoremstyle{remark}
\newtheorem{remark}[theorem]{Remark}
\DeclareMathOperator{\rad}{r}
\DeclareMathOperator{\SL}{SL}
\DeclareMathOperator{\ind}{\mathbf{1}}
\DeclareMathOperator{\vol}{vol}
\DeclareMathOperator{\covol}{covol}
\DeclareMathOperator{\Res}{Res}
\DeclareMathOperator{\Gal}{Gal}
\DeclareMathOperator{\card}{\#}
\DeclareMathOperator{\Tr}{tr}
\DeclareMathOperator{\N}{N}
\DeclareMathOperator{\Ar}{\widetilde{Cl}}
\DeclareMathOperator{\Cl}{Cl}
\newcommand{\OK}{\mathcal{O}_K}
\newcommand{\Eps}{\mathscr{E}}
\newcommand{\KR}{K_{\mathbb{R}}}
\newcommand{\mell}[1]{\left[ \mathcal{M} #1 \right] }
\newcommand{\mellK}[1]{\left[ \mathcal{M}_K #1 \right] }
\newcommand*\diff{\,\mathop{}\! \mathrm{d}\hspace*{-0.15em}\bar{}\hspace*{0.13em}}
\newcommand*\diffx{\,\mathop{}\! \mathrm{d}\hspace*{-0.15em}\bar{}^{\times}\hspace*{-0.27em}}
\title{Mean Value for Random Ideal Lattices}
\author{Nihar Gargava, Maryna Viazovska}
\begin{document}
\maketitle

\abstract{
We investigate the average number of lattice points within a ball for the $n$th cyclotomic number field, where the lattice is chosen at random from the set of unit determinant ideal lattices of the field. We show that this average is nearly identical to the average number of lattice points in a ball among all unit determinant random lattices of the same dimension. To establish this result, we apply the Hecke integration formula and subconvexity bounds on Dedekind zeta functions of cyclotomic fields. 

{
The symmetries arising from the roots of unity in an ideal lattice allow us to prove the existence of ideal lattice packings of dimension $\varphi(n)$ and density $ n \cdot 2^{- \varphi(n)}  (1+o(1))$ as $n$ goes to infinity.}

\section*{Introduction}

The following is a classical fact due to Siegel \cite{Sie45} that is often referred to as the Siegel mean value theorem. If $B \subseteq \mathbb{R}^{d}$ is a ball of volume $V$, then for any $d \geq 0$
\begin{equation}
	\mathbb{E} (\card B \cap \Lambda) = 1 + V, {\text{ for a Haar-random unit covolume lattice }\Lambda \in \SL_{d}(\mathbb{R})/\SL_{d}(\mathbb{Z})}.
	\label{eq:siegel}
\end{equation}
The $1$ on the right  side is the contribution of $0 \in \Lambda$, which lies in each lattice. It is worth noticing here that the expected value does not depend on $d$. This means that as long as the volume of the ball is fixed, the expected number of lattice points in a ball cannot tell us which dimension our lattices live in.
The goal of this paper is to have a version of~\cref{eq:siegel} 
for an ensemble of special lattices that have an arithmetic structure. Let us first set up some notation.

Let $K$ be a number field of degree $d$ and $\OK$ be the ring of integers $K$.
We define an ideal lattice to be a 
$\OK$-module in $K \otimes \mathbb{R} \simeq \mathbb{R}^{d}$, normalized to have covolume 1.
On $K \otimes \mathbb{R}$, one can induce a quadratic form naturally using the trace of the number field $K$ (see Equation (\ref{eq:norm})) and hence one can talk about unit covolume ideal lattices. 
The collection of all such lattices is a compact abelian Lie group denoted as $\Ar(K)$, called the Arakelov class group \cite{BDPW20} of the number field $K$. 
Naturally, we can embed $\Ar(K) \subseteq \SL_d(\mathbb{R})/\SL_d(\mathbb{Z})$.
One can then try to compute $\mathbb{E}_{\Lambda \in \Ar(K)} ( \card B \cap \Lambda)$ for a number field $K$.
See~\cref{fig:ideals} for a visualization for the case of $K=\mathbb{Q}[\sqrt{5}]$.

\begin{figure}
	\centering
\begin{subfigure}[t]{0.45\linewidth}
	\centering
\begin{tikzpicture}

    \draw[step=0.50,gray,very thin] (-4,-2) grid (0,2);

    \draw[->,gray,thick] (-4,0) -- (0,0) ;
    \draw[->,gray,thick] (-2,-2) -- (-2,2) ;

    \fill[gray] (-2,0) circle (2pt);

    \foreach \i in {-6,-5,-4,-3,-2,-1,0,1,2,3,4,5,6} {
        \foreach \j in {-6,-5,-4,-3,-2,-1,0,1,2,3,4,5,6} {
            \pgfmathsetmacro{\x}{0.5*\i + 0.5*1.618*\j-2} 
            \pgfmathsetmacro{\y}{0.5*\i - 0.5*0.618*\j} 
            \ifdim\x pt>-4.01pt \ifdim\x pt<0.01pt
                \ifdim\y pt>-2.01pt \ifdim\y pt<2.01pt
                    \fill[cyan] (\x,\y) circle (1.5pt);
                \fi\fi
            \fi\fi
        }
    }

\end{tikzpicture}
\caption{
For $K = \mathbb{Q}[\sqrt{5}]$, we can plot $\OK =\mathbb{Z}[\tfrac{1+\sqrt{5}}{2} ]
$ as the lattice $ \mathbb{Z} \cdot 
\left( \substack{1 \vspace{0.1cm} \\ 1}  \right)
+ \mathbb{Z} \cdot \left(\substack{ (1+ \sqrt{5})/{2} \\ (1-\sqrt{5})/{2} }  \right)$ 
in 
$K \otimes \mathbb{R} \simeq \mathbb{R}^{2}$.
}
 
\end{subfigure}
 \hspace{1cm}
\begin{subfigure}[t]{0.44\linewidth}
\centering
\begin{tikzpicture}
    \draw[step=0.50,gray,very thin] (-2,-2) grid (2,2);

    \draw[->,gray,thick] (-2,0) -- (2,0) ;
    \draw[->,gray,thick] (0,-2) -- (0,2) ;

    \fill[gray] (0,0) circle (2pt);

\foreach \t in {-4,-3,-2,-1,0,1,2,3}{
    \foreach \i in {-6,-5,-4,-3,-2,-1,0,1,2,3,4,5,6} {
        \foreach \j in {-6,-5,-4,-3,-2,-1,0,1,2,3,4,5,6} {
            \pgfmathsetmacro{\x}{0.5*\i*exp(\t*0.12) + 0.5*1.618*\j*exp(\t*0.12)} 
            \pgfmathsetmacro{\y}{0.5*\i*exp(-\t*0.12) - 0.5*0.618*\j*exp(-\t*0.12)} 
            \ifdim\x pt>-2.01pt \ifdim\x pt<2.01pt
                \ifdim\y pt>-2.01pt \ifdim\y pt<2.01pt
                    \pgfmathsetmacro{\s}{(\t/4+1)*50}
                    \fill[cyan!\s!red!80] (\x,\y) circle (1.5pt);
                \fi\fi
            \fi\fi
        }
    }
}
\end{tikzpicture}

\caption{
	Various ideal lattices for $K = \mathbb{Q}[\sqrt{5}]$
	superimposed on each other. 
}
\end{subfigure}
\caption{Ideal lattices for $K = \mathbb{Q}[\sqrt{5}]$. For real quadratic fields, when plotted this way, the lattice point of ideal lattices always lie in the set $\{(x,y) \in \mathbb{R}^{2} \mid x \cdot y \in \mathbb{Z}\}$.
Note that the ideal lattices in these plots are not with the unit covolume normalization.
}
 \label{fig:ideals}
\end{figure}

The main theorem that we present is the following. 
\begin{theorem}
\label{th:main}
Let $K$ be a cyclotomic number field. For each $K$, choose $B \subseteq K \otimes \mathbb{R}$ to be an origin-centered ball of volume $V$ with respect to the trace form (\ref{eq:norm}). Then, as $\deg K \rightarrow \infty$, we have for some $\eta>0$
\begin{equation}
\mathbb{E}(\card B \cap \Lambda) = 1 + V + \varepsilon(V,K)
\text{ for a Haar-random unit covolume }\Lambda \in \Ar(K).
\label{eq:main_estimate}
\end{equation}
where $\varepsilon(V,K)$ is an error term\footnote{The $\sqrt{V}$ in the error term agrees with the results of S\"odergren-Str\"ombergsson \cite{SS19} 
who show that for a Haar random lattice $\Lambda$ in 
$\SL_{d}(\mathbb{R})/\SL_{d}(\mathbb{Z})$, 
the random variable 
\begin{equation}
	\frac{\mathbb{E}(\card B \cap \Lambda) - ( 1 + V  )}{ \sqrt{2V}} \rightarrow \mathcal{N}(0,1) \text{ in distribution as }d \to \infty,
\end{equation}
assuming that $\log V = o(d)$ as $d \rightarrow \infty$. Here $\mathcal{N}(0,1)$ is a standard Gaussian random variable.}
that satisfies
\begin{equation}
	|\varepsilon(V,K)| \leq c \cdot  \sqrt{V}  \Delta_K^{{-\eta}} .
	\label{eq:error_term}
\end{equation}
for some constant $c > 0$ which is independent of $K$ and $V$ (but may depend on $\eta$).
Here $\Delta_{K}$ is the absolute value of the discriminant of the number field $K$.
\end{theorem}

The discriminant $\Delta_K$
of a cyclotomic field $K$ with $\deg K = d$ 
is at least $e^{c d \log d}$ for some constant $c>0$ (see~\cref{se:cyclo_satisfy}). 
{So actually, if one follows through the proof in Section \ref{se:proof},
a more precise version of the error term in Equation (\ref{eq:error_term}) can be that there exist constants $c_1,c_2>0$ chosen uniformly for all cyclotomic number fields $K$ such that 
$$|\varepsilon(V,K)|\leq V^{1/2}\,\exp\left(-c_1\,{d\log(d)}+c_2\,d \right) \text{ where }d= \deg K.$$
}

The best value of $\eta$ in Theorem \ref{th:main} depends on the availability of subconvexity bounds of the Dedekind zeta functions on the critical line. 
The currently available bounds \cite{PY23} are sufficient to prove any value of $\eta<1/12$. 
{ Under the Generalized Riemann Hypothesis, one can presumably show $\eta$ arbitrarily close to $1/4$. }
It must be noted that using only convexity bounds is not sufficient for showing $\eta >0$.

Furthermore, we do not expect this behaviour of the error term to be easily observed
in small degrees, since in our estimates we demand $\log (\deg K)$ to grow large enough.

\subsection*{Motivation}

\subsubsection*{Sphere packings}

Our main motivation here is that using an argument exactly like Venkatesh \cite{AV}, it is possible to improve the lower bound on the lattice packing constant by a factor of $2$.
It is possible to conclude the following from Theorem \ref{th:main}.
\begin{corollary}
\label{co:improv}
Let 
\begin{equation}
c_d = \sup \{\vol(B) \mid \exists g \in \SL_d(\mathbb{R}), g \mathbb{Z}^{d} \cap B = \{0\} \}.
\end{equation}
Then
for $n$ large enough and some constants $c_{1},c_{2}>0$ independent of $n$
\begin{equation}
	c_{\varphi(n) } \geq n - c_{1} e^{- c_{2} \varphi(n) \log n} .
\end{equation}
In particular, there exist infinitely many dimensions $d$ such that 
\begin{equation}
c_d \geq d \log \log d - 
c_{1} e^{-c_{2} d \log d }.
\end{equation}
\end{corollary}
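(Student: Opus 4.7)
The plan is to combine Theorem \ref{th:main} with the action of the cyclic group $G = \langle \zeta_n \rangle \subseteq K^{\times}$ of order $n$ on ideal lattices of $K = \mathbb{Q}(\zeta_n)$. Let $d = \varphi(n) = \deg K$. Any $\Lambda \in \Ar(K)$ is an $\OK$-module in $K \otimes \mathbb{R}$ of unit covolume, so $G$ acts on $\Lambda$ by multiplication. This action is free on $\Lambda \setminus \{0\}$: if $\zeta v = v$ for $\zeta \in G \setminus \{1\}$ and nonzero $v \in \Lambda$, then $\zeta - 1 \in K^{\times}$ annihilates $v$, but $\zeta - 1$ has nonzero image in each archimedean factor of $K \otimes \mathbb{R}$ and is therefore invertible there, forcing $v = 0$. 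Moreover the trace form is $G$-invariant since $\zeta\bar{\zeta} = 1$, so any origin-centered ball $B$ is $G$-stable. Consequently, the nonzero lattice points in $B \cap \Lambda$ come in $G$-orbits of size exactly $n$.

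Next I apply Theorem \ref{th:main} to a ball $B$ of volume $V = n - C\sqrt{n}\,\Delta_K^{-\eta}$ for a sufficiently large absolute constant $C$. The expected number of nonzero lattice points in $B$ is $V + \varepsilon(V,K)$, and the bound $|\varepsilon(V,K)| \ll \sqrt{V}\,\Delta_K^{-\eta}$ forces this expectation to be strictly less than $n$ once $n$ is large enough. Hence there exists some $\Lambda_0 \in \Ar(K)$ containing fewer than $n$ nonzero points inside $B$. By the orbit argument of the previous paragraph, this count is a nonnegative multiple of $n$ and therefore equals zero. Via the embedding $\Ar(K) \hookrightarrow \SL_d(\mathbb{R})/\SL_d(\mathbb{Z})$, this produces a unit-covolume lattice in dimension $d = \varphi(n)$ avoiding $B$, giving $c_{\varphi(n)} \geq V$.

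To obtain the form of the error stated in the corollary, I use the lower bound $\Delta_K \geq e^{cd\log d}$ for cyclotomic fields and the elementary fact $\log n = \log d + O(1)$ (since $\varphi(n) = n^{1-o(1)}$). Then $\sqrt{n}\,\Delta_K^{-\eta} = O(e^{-c'\varphi(n)\log n})$, which is the first displayed inequality. For the existence of infinitely many $d$ achieving the $d\log\log d$ bound, I specialize to primorials $n = p_1 p_2 \cdots p_k$; by Mertens' third theorem, $n/\varphi(n) \sim e^{\gamma}\log\log n$ along this subsequence, and since $\log\log n = \log\log\varphi(n) + O(1)$ here, we obtain $n \geq \varphi(n)\log\log\varphi(n)$ for all sufficiently large primorials $n$. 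Setting $d = \varphi(n)$ and substituting into the first inequality yields $c_d \geq d\log\log d - O(e^{-cd(\log d + O(1))})$ for infinitely many $d$.

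The hard part is essentially packaged inside Theorem \ref{th:main}: what drives the factor-of-$2$ enhancement over Venkatesh's original bound is the sharpness of the mean value $1+V$ (matching the Siegel mean value verbatim) together with the power-saving error $\varepsilon(V,K) = o(1)$ that persists uniformly across cyclotomic fields. Given that, the remaining arguments above, namely the freeness of the $G$-action, the $G$-invariance of balls, and the passage from primorials to infinitely many dimensions, are all routine.
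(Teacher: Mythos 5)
Your proof is correct and follows essentially the same route as the paper: exploit the $\mu_n$-invariance of both the ball and the ideal lattice (with freeness of the action on nonzero points) to force $\card(B\cap\Lambda)\in 1+n\mathbb{Z}_{\geq 0}$, then invoke Theorem~\ref{th:main} to find a lattice with count below $n$, hence equal to $1$, and finally pass to primorials. Your write-up is in fact more explicit than the paper's on the freeness of the $\mu_n$-action and on the choice of $V$; the one inaccuracy, ``$\log n=\log d+O(1)$'', is false as stated (for primorials $\log n-\log\varphi(n)\sim\log\log\log n$), but the weaker true bound $\log n\leq 2\log\varphi(n)$ already suffices for the exponential estimate you need, so nothing breaks.
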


\begin{proof} {\bf (of Corollary \ref{co:improv})}
Let $K = \mathbb{Q}(\mu_n)$, where $\mu_{n} \in \overline{\mathbb{Q}}$ is a primitive $n$th root of unity. 

Observe that both the ball $B$, due to the definition of the trace form in Equation (\ref{eq:norm}), and the lattice $\Lambda$, being an ideal lattice, are invariant under the action of the roots of unity $\mu_n \subseteq \OK^{\times}$. 
Hence, we obtain that $\card B \cap V \in 1 + n \mathbb{Z}_{\geq 0}$. Hence, if $V < n - O(e^{-c \varphi(n) \log n})$, we know that $B \cap \Lambda = \{0\}$.

Then, using the choice of $n$ as the product of first $k$ primes like in \cite{AV}
and letting $k \rightarrow \infty$, we maximize the ratio of $\varphi(n)$ and $n$ to obtain the sequence of dimensions.
\end{proof}

In general, it is an open problem to show an ``explicitly'' defined lattice $\Lambda \subseteq \mathbb{R}^{d}$ 
with $\vol(\mathbb{R}^{d}/ \Lambda)=1$
such that for $B \cap \Lambda = \{0\}$ for an origin-centered ball $B \subseteq \mathbb{R}^{d}$ whose volume is at least a constant.
The work of Terras \cite[Corollary 3]{T1980} proves using the non-vanishing of Dedekind zeta functions 
that there are ideal lattices of number fields of arbitrarily large degree 
that satisfy this property for $|\vol(B)| \geq (1- \varepsilon)^{d}$ for $\varepsilon$ arbitrarily small. 
\cref{th:main} and~\cref{co:improv} is a stronger version of Terras' result for the case of cyclotomic number fields with $\vol(B)$ slightly smaller than $d$.

Back when Venkatesh's result had been published, it was not just the best sphere packing bound in high dimensions.
That has now been surpassed by the work of Klartag~\cite{K25}
shows the existence of unit covolume lattices that prove $c_{d} \geq c d^{2}$ for some constant $c > 0$ for all $d \in \mathbb{Z}_{\geq 1}$.

Nonetheless, the case of ideal lattices for sphere packing still remains interesting because of its connections to number theory. 

\subsubsection*{Lattice-based cryptography}

Ideal lattices over cyclotomic fields are of independent interest 
due to their use in post-quantum cryptography \cite{BDPW20}. 
Probabilistic results about behaviour of short vectors in ideal lattices are interesting from this point of view. 
To this end, Theorem \ref{th:main} shows that as long as the average number of lattice points in a ball of fixed volume is concerned, random ideal lattices over cyclotomic fields of large degree 
satisfy similar ``Gaussian heuristics'' as a random unit covolume lattice in $\SL_{d}(\mathbb{R})/\SL_{d}(\mathbb{Z})$. 
This has attracted some interest from the cryptography community. 
For example, a chapter in the doctoral thesis by H. Bambury~\cite{B25} is devoted to explicitly evaluating the error term $\varepsilon(K,V)$ in~\cref{eq:main_estimate}. 

See~\cite{GSVV25} for a similar result about module lattices over cyclotomic fields

\subsection*{General number fields}

Let $\mathcal{S}$ be a set of number fields upto isomorphism. 
Then one can conclude that $\mathcal{S}$ contains finitely many number fields with bounded discriminant. Indeed,
for any $T>0$, it is a known fact that there are finitely many number fields $K$ with $\Delta_{K} \leq T$.

For a set $\mathcal{S}$, the following are the hypotheses that interest us.
\begin{enumerate}

    \item 
\subsubsection*{Subconvexity}

The following is the version of subconvexity hypothesis required for our purpose.
\begin{hypothesis}
	\label{hy:lind}
For some $1/4 \geq \eta_{0}, \eta_{2} > 0$ and some $ \frac{1}{2}  \geq \eta_{1} > 0$, there exists a universal constant $C(\eta_{0},\eta_{1},\eta_{2}) > 0$, 
such that for all number fields $K \in \mathcal{S}$ and all $t \in \mathbb{R}$ one has
\begin{equation}
	\label{eq:lind}
    | \zeta_{K}( \tfrac{1}{2} + it)| \leq C(\eta_{0},\eta_{1},\eta_{2})^{\deg K} \cdot 
    \left( \Delta_{K} \right)^{\frac{1}{4}-\eta_{0}} \cdot
    \left( (1 + |t|)^{r_{1} }  \right)^{ \frac{1}{2}-\eta_{1}} \cdot
    \left( (1 + |t|)^{ 2 r_{2} }  \right)^{ \frac{1}{4}-\eta_{2}}.
\end{equation}
Here $(r_{1},r_{2})$ is the signature of the number field $K$ so that $\deg K = r_{1}  + 2r_{2}$. The constants $\eta_{0},\eta_{1},\eta_{2}$ are required to be independent of $r_{1},r_{2}$ and $K$.
\end{hypothesis}

When $\tfrac{1}{2}-\eta_{1}=\tfrac{1}{4}-\eta_{2} < \frac{1}{4}$, 
this type of bound on $\zeta_{K}(\tfrac{1}{2}+it)$ 
is called a ``subconvexity bound'' with respect to the analytic conductor $\Delta_{K} \cdot (1+|t|)^{\deg K}$. 
The corresponding ``convexity bound'' would correspond to the statement with $\eta_{0},\eta_{2}=- \varepsilon$ and $\eta_{1} = \tfrac{1}{4}+ \varepsilon$ for some arbitrarily small $\varepsilon > 0$
 \cite{I97}.

\begin{remark}
 It would also be pertinent to stress that weak subconvexity may not suffice when $\deg K \xrightarrow[]{} \infty$. 
 This is because we need the constant $C^{\deg K}$ in front of~\cref{eq:lind}
 to be absorbed in $\Delta_{K}^{-\eta_{0}}$ to make our error term disappear (c.f.~\cref{hy:disc}). 
 It is expected that a constant $C^{\deg K}$ would be present with any subconvexity bound of the style in~\cref{eq:lind}. 
\end{remark}


\item 
\subsubsection*{Lower bound on the discriminant}

One also needs the following hypothesis for dealing with the exponential factors $C(\eta_{1},\eta_{2})$ 
that appear in~\cref{hy:lind}.
\begin{hypothesis}
\label{hy:disc}
\begin{equation}
    \text{ For $K \in \mathcal{S}$, }\frac{\deg K}{\log\Delta_{K} } \rightarrow  0 \text{ as } \Delta_K \rightarrow  \infty.
\end{equation}
\end{hypothesis}
It is known that counterexamples to~\cref{hy:disc} exist. This is due to Golod and Shafarevich's work on infinite class field towers of ramified extensions constructed using class field theory ~\cite{GS1964}. 

On the other hand, it is known that $\deg K / \log \Delta_{K}$ is bounded by some universal constant~\cite{O1990}. If one has $\deg K / \log \Delta_{K} < c$ for some small enough compared $c>0$ compared to $\eta_0$ and $\log C(\eta_0,\eta_1,\eta_2)$ from \cref{eq:lind}, it may also be a sufficient hypothesis (see \cref{eq:using_hy_disc_final}). It is not clear what the interest in such a scenario may be.

\item 
\subsubsection*{Lower bound on residues of Dedekind zeta function}

We will also need to assume the following hypothesis.
\begin{hypothesis}
	One has for $K \in \mathcal{S}$ 
\begin{equation}
 \frac{\log  |\Res_{s=1} \zeta_K(s) |}{ \log \Delta_{K}}  \rightarrow  0 \text{ as } \Delta_{K} \xrightarrow[]{} 0.
\end{equation}
\label{hy:residue_estimate}
\end{hypothesis}

This hypothesis is actually implied by~\cref{hy:disc} if one assumes that all the number fields $K \in \mathcal{S}$ are Galois, or if one assumes the Generalized Riemann hypothesis (GRH). 
This implication is due to the Brauer-Siegel theorem (c.f.~\cite{S1974}).

In fact, for the case of normal number fields, Stark \cite{S1974} proves a much stronger result. We will discuss this in~\cref{se:cyclo_satisfy}.
\end{enumerate}

One can then state the theorem~\cref{th:main} in the following generality.
\begin{theorem}
\label{th:general}
Let $\mathcal{S}$ be set of number fields satisfying~\cref{hy:lind},~\ref{hy:disc} and~\ref{hy:residue_estimate}.
Let $\eta_{0}$ be as in~\cref{eq:lind}.
For each $K \in \mathcal{S}$, choose $B \subseteq K \otimes \mathbb{R}$ to be an origin-centered ball of volume $V$ with respect to the trace form (\ref{eq:norm}). Then one has
\begin{equation}
\mathbb{E}(\card B \cap \Lambda) = 1 + V + \varepsilon(V,K)
\text{ for a Haar-random unit covolume }\Lambda \in \Ar(K),
\end{equation}
where as $\Delta_K \rightarrow \infty$, 
we have for any $0 < \eta <  \eta_{0}$ 
some constant $c = c(\eta, \mathcal{S})$ such that
\begin{equation}
|\varepsilon(V,K)| \leq c \cdot  \sqrt{V}  \Delta_K^{{-\eta}} .
\end{equation}

\end{theorem}

The proof of~\cref{th:main} follows uses the following hypotheses for the case of cyclotomic number fields, that is when
\begin{equation}
    \mathcal{S}=  \mathcal{S}_{\mathrm{cyc}} = \{ \mathbb{Q}(\mu_{n}) \mid \mu_{n} \in \overline{\mathbb{Q}} \text{ is a primitive } n\text{th root of unity}\}.
\end{equation}

We will further elaborate on this in~\cref{se:cyclo_satisfy} on why the above hypotheses hold true for the case of cyclotomic fields.

\subsection*{Related work and proof techniques}

In terms of technique, this work draws inspiration from \cite[Proposition 12.5]{ELMV11} which is related to proving Duke's theorem for cubic fields. In their paper, Einsiedler-Lindenstrauss-Michel-Venkatesh showed {a version of} Theorem \ref{th:main} when $K$ varies over cubic fields of growing discriminant by using Hecke integration formula and subconvexity, just like we use it in this paper. 
Combined with dynamical ideas, for $\SL_3(\mathbb{R})/\SL_3(\mathbb{Z})$ this was sufficient to prove equidistribution of measures supported on these ideal lattices as $\Delta_K \rightarrow \infty$. 
Since in our case the degree of the number field $K$ may go to infinity, it is no longer meaningful to talk about equidistribution. 
In the situation with growing degree, Theorem \ref{th:main} is a variant of Duke's theorem for cyclotomic fields.

Here is a rough outline of the proof of \cref{th:main}. The main input is the Hecke integration formula given in \cref{th:hecke_original} which lets us average Epstein zeta functions on ideal lattices. One can then use this formula to count the average number of lattice points using an inverse Mellin transform. 
This is done in \cref{co:contour_shifting} using a contour shifting argument which allows writing the average as a residue term and a certain contour integral involving the Dedekind zeta function of the number field $K$. The former is our main term and the latter is the error term. 
There is a small obstruction in contour shifting, which is that the decay rate of the integrand may not be sufficient to allow moving the vertical contour line. 
So in \cref{ss:hif_ind} we show that when $\deg K$ is large enough, ratios of gamma functions decay fast enough to allow contour shifting. 
Finally, one used the hypotheses stated earlier to hammer the nail,
which allows
one to show that the error term in \cref{th:main} vanishes as $\deg K \rightarrow \infty$.

We prove that the Hypotheses \ref{hy:lind}, \ref{hy:disc} and \ref{hy:residue_estimate} 
are satisfied for cyclotomic number fields using two deep theorems, 
namely subconvexity bounds by Petrov-Young \cite{PY23} and the other being an effective version of the Brauer-Siegel theorem due to Stark \cite{S1974}.

\subsection*{Organization of the paper}

This paper has been written for an interdisciplinary audience and we have tried to make it self-contained. In particular, we spend a considerable effort in reintroducing the classical work of Hecke which is well-known in analytic number theory and geometry of numbers.

Section \ref{se:setup} contains a recollection of classical facts. 
A reader familiar with these concepts can freely skip this section.
Section \ref{se:hecke} contains a description of the Hecke integration formula, written in a language suitable for our application, along with estimates on gamma factors. 
Finally, Section \ref{se:cyclo_satisfy} contains a discussion on subconvexity, Stark's theorem and the proof of~\cref{hy:lind},~\cref{hy:disc} and~\cref{hy:residue_estimate} for the case of cyclotomic fields.

\section*{Acknowledgements}
While writing this paper, we have enjoyed talking to Vlad Serban, Jialun Li, Henry Bambury, Phong Nguyen and Seungki Kim. 
We thank Petru Constantinescu for pointing us to some relevant subconvexity literature. We thank Philippe Michel, Wei Zhang and ``GH from MO''\footnote{This is a Mathoverflow handle whose human identity we do not know. See \cite{MOconv}.
} 
for pointing to the state-of-the-art knowledge about subconvexity for Dirichlet L-functions.

Nihar Gargava has received funding from the ERC Grant\footnote{
Funded by the European Union. Views and opinions expressed are however those of the author(s) only and do not necessarily reflect those of the European Union or the European Research Council Executive Agency. Neither the European Union nor the granting authority can be held responsible for them.
}
101096550 titled ``Integrating Spectral and Geometric data on Moduli Space'' 
and from the SNSF grant 200021 184927 titled ``Optimal Configurations in Multidimensional Spaces'' while working on this project. 
Maryna Viazovska's research was supported by the SNSF grants  200021 184927 ``Optimal configurations in multidimensional spaces''  and  200020 215337 ``Sphere packing, Energy minimization and Fourier interpolation''.

\section{Preliminaries}
  \label{se:setup}

Let $K$ be a number field of degree $d=r_1 + 2r_2$ where $r_1$ is the number of real embeddings and $r_2$ is the number of pairs of complex embeddings, paired under the complex conjugation. 
We denote by $x \mapsto \overline{x}$ the complex conjugation on each of the $r_2$ pairs of complex embeddings $K \rightarrow \mathbb{C}$.
Let $\Delta_K$ be the absolute value of the discriminant of $K$.
Let $\KR = K \otimes_{\mathbb{Q}} \mathbb{R}$.
We endow $\KR$ with the inner product
\begin{equation}
\label{eq:norm}
  x \mapsto \Tr(x \overline{x}).
\end{equation}

 We denote by $\OK$ the ring of integers of $K$.  In the geometry of numbers, it is a classical fact that $\Delta_K$ is the square of the covolume of $\OK \subseteq \KR$ with respect to this quadratic form. That is
\begin{equation}
	\sqrt{\Delta_{K} } = \vol( \KR/\OK ).
\end{equation}

We see every ideal $\mathcal{I} \subseteq \OK$ as a lattice in $\KR$. With respect to this, the ideal $\mathcal{I}$ has 
\begin{equation}
\vol(\KR/\mathcal{I}) = \covol(\mathcal{I}) = \N(\mathcal{I}) \sqrt{\Delta_K},
\label{eq:vol}
\end{equation}
where $\N(\mathcal{I}) = \card \OK/\mathcal{I}$, which is known to be finite. 
See~\cite{Neu13} for more background information.

\begin{remark}
With a different choice of the norm in Equation (\ref{eq:norm}), it is common to have a factor of $2^{-r_2}$ in Equation (\ref{eq:vol}). See \cite[Chapter I-\S 5]{Neu13}.
\end{remark}

\begin{remark}
\label{re:peculiar_norm}
To get an idea of the quadratic form in~\cref{eq:norm}, observe that 
the region $[-1,1]^{r_1} \times \{x \in \mathbb{C} \mid |x| \leq 1\}^{r_2} \subseteq \mathbb{R}^{r_1} \times \mathbb{C}^{r_2}\simeq \KR$  should have a volume of $2^{r_1} (2 \pi)^{r_2}$ (and not $2^{r_1} \pi^{r_2}$).

For example, with $K =\mathbb{Q}( \sqrt{-1})$, $1 \in K \subseteq K_{\mathbb{R}}$ is at a distance $\sqrt{2}$ from the origin.
\end{remark}

\subsection{Arakelov class group}
\label{se:arakelov}

Let $\Ar(K)$ be the Arakelov class group of $K$, also known as the extended class group in \cite{MV07}. 
The Arakelov class group $\Ar(K)$ 
is defined as the set of ideal lattices $(g, \mathcal{I})$ where $\mathcal{I}\subseteq K$ is a finitely generated $\OK$-module
and $g \in K_\mathbb{R}^{\times}$. We identify two ideal lattices as per the rule
\begin{equation}
	(g_1, \mathcal{I}_1) = (g_2,\mathcal{I}_2) \Leftrightarrow g_1 \mathcal{I}_1  = c\cdot g_2 \mathcal{I}_2 \text{ for some }c>0.
\end{equation}
Here the equality on the right is the equality of sublattices in $\KR$. 

Observe that the following sequence of topological abelian groups is exact.
\begin{equation}
     0 \rightarrow \KR^{(1)}/\OK^{\times} \rightarrow \Ar(K) \rightarrow \Cl(K)  \rightarrow 0
.\end{equation}

Here $K_\mathbb{R}^{(1)}$ is the group 
\begin{equation}
  \KR^{(1)}= \{a \in \KR \mid |\N(a)| = 1\}.
\end{equation}
The map $\N:K_\mathbb{R} \rightarrow \mathbb{R}^{\times}$ is the norm map. Dirichlet's unit theorem tells us that $\KR^{(1)}/\OK^{\times}$ is compact.

One can renormalize every ideal lattice to be of unit determinant by {identifying it with the following sublattice} of $\KR$:
\begin{equation}
	(g, \mathcal{I}) \mapsto \left( \sqrt{\Delta_K} \cdot |\N(g)| \cdot \N(\mathcal{I}) \right) ^{-\frac{1}{d}} \cdot g \mathcal{I}.
\end{equation}
Hence, from now on, when we say $\Lambda \in \Ar(K)$ is an ideal lattice, we mean a unit covolume lattice.

\subsubsection{Haar measure on the Arakelov Class group}
\label{se:arakelov_haar}
Since $\Ar(K)$ is a locally compact topological group, it admits a Haar measure. If we fix a Haar measure $ \diffx a$ on $K_\mathbb{R}^{(1)}$, 
we get a natural measure on $\Ar(K)$ as $\card \Cl(K)$ number of copies of $\KR^{(1)}/\OK^{\times}$. The most natural choice of a Haar measure on $\KR^{(1)}$ is 
\begin{equation}
	\int_A \diffx a =  \int_{ A \cdot (0,1] \subseteq \KR} \diff x  , \text{ for any measurable } A \subseteq \KR^{(1)},
  \label{eq:haar_measure_on_KR1}
\end{equation}
where $\diff x$ is the Lebesgue measure on $\KR$ with respect to the inner product given in Equation (\ref{eq:norm}).
Throughout the paper, we will assume $\KR^{(1)}$ has this measure. 

\begin{remark}
\label{re:connection_between_measures}
To see how the measure $\diff x$ on $\KR$ and $\diffx a $ on $\KR^{(1)}$ are related, we observe that for $A \subseteq \KR^{(1)}$ and $T > 0$, we have for $\varepsilon \rightarrow 0$
\begin{equation}
	\int_{A \cdot (T, T+ \varepsilon )} \diff x =  \varepsilon \, (dT^{d-1})\,  \int_{A} \diffx a + O(\varepsilon^{2}).
\end{equation}
\end{remark}

\subsection{Dedekind zeta function}

Recall that the Dedekind zeta function of a number field is the function
\begin{equation}
\zeta_K(s) = 
\sum_{\substack{\mathcal{I} \subseteq \OK \\ \mathcal{I} \text{ ideal}}} 
\frac{1}{\N(\mathcal{I})^{s}} \text{ for }\Re(s) > 1.
\end{equation}
This function can get meromorphically continued to the complex plane with  poles at {$s=0$ and $s=1$}. The residue at {$s=1$} is the one that will concern us the most.

\subsubsection{Residue at 1}
Using a standard argument, one observes readily that  
\begin{equation}
   \Res_{s=1} \zeta_K(s)
	 = 
	  \lim_{T \rightarrow \infty}\frac{ \card \{\text{Ideals }\mathcal{I} \subseteq \OK \mid \N(\mathcal{I}) \leq T \}  }{T}.
\end{equation}

By a classical 
geometry of numbers argument, one can precisely evaluate this residue. First we need the following lemma which will also be of use eventually.
\begin{lemma}
	\label{le:counting_ideals}
	Let $\mathcal{I} \subseteq K$ be a fractional ideal.
	There is a bijection between $w \in (\mathcal{I}\setminus \{0\}) / \OK^{\times}$ and the ideals in the ideal class $[\mathcal{I}^{-1}] \in \Cl(K)$ given by 
	\begin{equation}
	  w \mapsto  w \mathcal{I}^{-1}.
	\end{equation}
\end{lemma}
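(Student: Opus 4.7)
The statement is a standard bijection in algebraic number theory, and my plan is to verify well-definedness, injectivity, and surjectivity of the claimed map separately, using only the basic properties of fractional ideals and the fact that $\Cl(K)$ is the quotient of the group of fractional ideals by the principal ones.

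First I would check that the map is well defined on $\mathcal{I}\setminus\{0\}$: given such a $w$, the principal fractional ideal $(w)$ is contained in $\mathcal{I}$ (since $w\in\mathcal{I}$ and $\mathcal{I}$ is an $\OK$-module), so $w\mathcal{I}^{-1}=(w)\mathcal{I}^{-1}$ is a product of a sub-$\OK$-module of $\mathcal{I}$ with $\mathcal{I}^{-1}$, which is therefore contained in $\OK$; hence it is an integral ideal. Moreover, its class in $\Cl(K)$ equals $[(w)][\mathcal{I}^{-1}]=[\mathcal{I}^{-1}]$ because $(w)$ is principal. So the map indeed lands in the set of integral ideals in the class $[\mathcal{I}^{-1}]$.

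Next I would descend to $(\mathcal{I}\setminus\{0\})/\OK^{\times}$ and show injectivity at the same time: $w_1\mathcal{I}^{-1}=w_2\mathcal{I}^{-1}$ is equivalent, after multiplying by $\mathcal{I}$, to $(w_1)=(w_2)$, and two elements of $K^{\times}$ generate the same principal ideal iff their quotient is a unit, i.e. lies in $\OK^{\times}$. This gives a well-defined injection from $(\mathcal{I}\setminus\{0\})/\OK^{\times}$ into the integral ideals of class $[\mathcal{I}^{-1}]$.

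For surjectivity, let $\mathcal{J}\subseteq\OK$ be an integral ideal with $[\mathcal{J}]=[\mathcal{I}^{-1}]$. Then $\mathcal{J}\mathcal{I}$ has trivial class, so it is a principal fractional ideal, say $\mathcal{J}\mathcal{I}=(w)$ for some $w\in K^{\times}$. Since $\mathcal{J}\subseteq\OK$, we have $(w)=\mathcal{J}\mathcal{I}\subseteq\mathcal{I}$, so $w\in\mathcal{I}\setminus\{0\}$, and $w\mathcal{I}^{-1}=(w)\mathcal{I}^{-1}=\mathcal{J}$, proving that every such $\mathcal{J}$ is hit.

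There is no real obstacle here; the only thing to be slightly careful about is that all manipulations take place in the group of fractional ideals, where $\mathcal{I}^{-1}$ is defined unambiguously and $\mathcal{I}\mathcal{I}^{-1}=\OK$, so the cancellations used above are legitimate.
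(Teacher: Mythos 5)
Your proof is correct and follows essentially the same route as the paper's: both arguments factor the map through the principal ideal $(w)$, verify that $w\mathcal{I}^{-1}$ is integral of the right class, and establish the inverse by writing any $\mathcal{J}$ in class $[\mathcal{I}^{-1}]$ as $\mathcal{J}\mathcal{I}=(w)$. You are merely a bit more explicit than the paper in separating well-definedness, injectivity, and surjectivity.
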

\begin{proof}
	If $w \in \mathcal{I}$ then the $\OK$-module $\langle w \rangle$ generated by $w$ lies in $\mathcal{I}$. 
	Hence, there exists a fractional ideal $\mathcal{J} \subseteq K$ such that $ \langle w \rangle  = \mathcal{J}\mathcal{I}$. 
	One can then check that $\mathcal{J} \subseteq \OK$ and $\mathcal{J}$ is actually an ideal. Clearly $[\mathcal{J}] = [\mathcal{I}^{-1}]$

	On the other hand, suppose we are given an ideal $\mathcal{J}\subseteq \OK$ in the ideal class of $[\mathcal{I}^{-1}]$. Then, we must have that $\mathcal{J}\mathcal{I} = \langle w \rangle$ for some $w \in K^{\times}$. Since $\langle w \rangle \subseteq  \mathcal{I}$, this uniquely identifies $w \in (\mathcal{I} \setminus \{0\})/ \OK^{\times}$.
\end{proof}

We are now ready to prove the following {classical result, known as analytic class number formula}.
\begin{lemma}
\label{le:residue_of_dz}
\begin{equation}
\Res_{s=1} \zeta_K(s) = \frac{\vol( \Ar(K)) }{\sqrt{ \Delta_K} }.
\end{equation}
\end{lemma}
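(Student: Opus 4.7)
The plan is to extract the residue from the asymptotic ideal-counting formula
\[
\Res_{s=1}\zeta_K(s) = \lim_{T\to\infty} \frac{\card\{\mathcal{J}\subseteq\OK : \N(\mathcal{J})\leq T\}}{T}
\]
recorded just before the lemma, and to evaluate the right-hand side by grouping ideals according to their ideal class and then turning each class's count into a lattice-point count in $\KR$ via Lemma \ref{le:counting_ideals}.

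Concretely, I would fix a set of representatives $\mathcal{I}_1,\dots,\mathcal{I}_h$ for $\Cl(K)$, with $h = \card\Cl(K)$. Lemma \ref{le:counting_ideals}, applied with $\mathcal{I}_j$ representing the inverse class, provides a bijection between ideals $\mathcal{J}\subseteq\OK$ in the class $[\mathcal{I}_j^{-1}]$ and elements of $(\mathcal{I}_j\setminus\{0\})/\OK^{\times}$, sending $w\mapsto w\mathcal{I}_j^{-1}$ and satisfying $\N(w\mathcal{I}_j^{-1}) = |\N(w)|/\N(\mathcal{I}_j)$. Thus counting ideals $\mathcal{J}$ of norm at most $T$ in the class $[\mathcal{I}_j^{-1}]$ is the same as counting orbits $w\in(\mathcal{I}_j\setminus\{0\})/\OK^\times$ with $|\N(w)|\leq T\,\N(\mathcal{I}_j)$.

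The next step is a classical geometry-of-numbers count. Using the decomposition $\KR\setminus\{\N=0\} \simeq \KR^{(1)}\times\mathbb{R}^{\times}_{>0}$ given by $x = a\cdot |\N(x)|^{1/d}$ with $a\in\KR^{(1)}$, and choosing a (bounded, by Dirichlet's unit theorem) fundamental domain $F\subseteq \KR^{(1)}$ for $\OK^\times$, the set
\[
R_{T'} := \{x\in\KR : 0<|\N(x)|\leq T'\}/\OK^\times
\]
can be represented inside $\KR$ as $F\cdot (0,T'^{1/d}]$. Its Lebesgue volume is $T'\cdot\vol(F)$ by the definition of $\diffx a$ in Equation (\ref{eq:haar_measure_on_KR1}) together with the scaling $x\mapsto T'^{1/d}x$. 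Since $F$ is bounded, a standard principle of Lipschitz counting shows that the number of orbits $w\in(\mathcal{I}_j\setminus\{0\})/\OK^\times$ lying in $R_{T'}$ is asymptotic to $\vol(R_{T'})/\covol(\mathcal{I}_j)$ as $T'\to\infty$; by Equation (\ref{eq:vol}) this equals $T'\vol(F)/\bigl(\N(\mathcal{I}_j)\sqrt{\Delta_K}\bigr)$.

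Putting everything together with $T' = T\,\N(\mathcal{I}_j)$, the asymptotic contribution of each class is
\[
\frac{T\,\N(\mathcal{I}_j)\cdot\vol(F)}{\N(\mathcal{I}_j)\sqrt{\Delta_K}} = \frac{T\,\vol(F)}{\sqrt{\Delta_K}},
\]
which is independent of $j$. Summing over the $h$ ideal classes and using $\vol(\Ar(K)) = h\cdot\vol(F)$ from the exact sequence in Subsection \ref{se:arakelov}, one obtains the claim. The only nontrivial step is the Lipschitz counting argument in the third paragraph; the main obstacle there is to verify that the boundary effects remain of lower order, which follows because $F$ can be chosen with a piecewise smooth (in fact, polytopal in log coordinates) boundary, so that the number of lattice points near the boundary of the dilated region is $O(T^{(d-1)/d})$.
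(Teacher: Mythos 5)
Your proposal is correct and follows essentially the same route as the paper: reduce the residue to the ideal-counting asymptotic, biject ideals in each class to $\OK^\times$-orbits in a fractional ideal via Lemma \ref{le:counting_ideals}, count lattice points in the scaled cone $F\cdot(0,T'^{1/d}]$, and sum over classes. The paper only sketches this; you have spelled out the volume bookkeeping and the Lipschitz boundary-error term, which is a reasonable filling-in of the same argument rather than a different one.
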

\begin{proof}
We will only sketch the proof. 
Let us fix an ideal class $[\mathcal{I}^{-1}]\in \Cl(K)$. Then, we get from 
Lemma \ref{le:counting_ideals} that the set of ideals $\mathcal{J} \subseteq \OK$ in this ideal class are in bijection with 
the set $( \mathcal{I}\setminus \{0\} )/\OK^{\times}$. So we get that 
\begin{equation}
	\card \{ \text{ Ideals } \mathcal{J} \in [\mathcal{I}^{-1}] \mid \N(\mathcal{J}) \leq T  \} = 
	\card \{ w\in (\mathcal{I}\setminus \{0\})/\OK^{\times} \mid \N(w) \leq  T \N(\mathcal{I}) \}
\end{equation}
This is the same thing as counting lattice points in a domain in $\KR$. That is, the count equals
\begin{equation}
  \card \left( \{ x \in \KR^{\times} \mid \N(x) \leq T \N(\mathcal{I})\} \cap \mathcal{I}\right)/ \OK^{\times} .
\end{equation}
One can then observe that 
\begin{equation}
	\vol(\KR/\mathcal{I}) = \N(\mathcal{I}) \sqrt{\Delta_K}.
\end{equation}
And so the point count is asymptotically (for large $T$)
\begin{equation}
	\card \{ \text{ Ideals } \mathcal{J} \in [\mathcal{I}^{-1}] \mid \N(\mathcal{J}) \leq T  \} \sim  \frac{\vol(\KR^{(1)}/\OK^{\times})}{\sqrt{\Delta_K} \N(\mathcal{I})}  \cdot (T \N(\mathcal{I})),
\end{equation}
where the volume on $\KR^{(1)}$ is taken with respect to the Haar measure defined in Section \ref{se:arakelov_haar}. Doing this for each ideal class and summing up gives the required statement.
\end{proof}

\subsection{Mellin transform}

Let $g : \mathbb{R}_{>0} \rightarrow \mathbb{C}$ be a compactly supported smooth function (so the support is bounded away from $0$). We define the Mellin transform of $g$ to be 
\begin{equation}
	\mell{g} (s) =  \int_{0}^{\infty} g(t)t^{s-1} \diff t.
\end{equation}
This is a holomorphic function on all of the complex plane. One can recover the function $g$ using the inverse Mellin transform as 
\begin{equation}
	g(t) = \frac{1}{2 \pi i } \int_{\sigma-i \infty}^{\sigma + i \infty} t^{-s} \mell{g}(s) \diff s ,
\end{equation}
where we are free to choose any $\sigma \in \mathbb{R}$. 

However, our main interest is in using the theory of the Mellin transform for the function $\ind_{[0,R]}$, which is the indicator function of the interval $[0,R] \subseteq \mathbb{R}$ for some $R > 0$. For this situation, observe that 
\begin{equation}
\mell{\ind_{[0,R]}}(s) = \tfrac{1}{s}R^{s} {\text{ for } s \neq 0, }
\label{eq:inverse_mellin}
\end{equation}
which is a holomorphic function on $\Re(s)>0$. 
In this case, one can still nearly recover the function {$\ind_{[0,R]}$ on $\mathbb{R}_{>0}$ }using an improper integral.
\begin{lemma}
	Suppose that $t\in \mathbb{R}_{>0}$  and $t \neq R$. Then
\label{le:improper}
\begin{equation}
	\ind_{[0,R]}(t) = \lim_{T \rightarrow \infty} \frac{1}{2\pi i}  \int_{\sigma - iT}^{\sigma +iT} t^{-s}\frac{R^{s}}{s} \diff{s} \text{ for any $\sigma>0$}. 
\end{equation}
For $t=R$, the right hand side is equal to $1/2$.
\begin{proof}
	Follows from residue calculations.
	See \cite[(5.3),(5.5)]{MV06} for instance.
\end{proof}
\end{lemma}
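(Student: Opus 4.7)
The plan is to reduce to the classical Perron-type integral by substituting $y = R/t$, so that the quantity of interest becomes
\begin{equation*}
\lim_{T \to \infty} \frac{1}{2\pi i} \int_{\sigma - iT}^{\sigma + iT} \frac{y^{s}}{s} \diff s,
\end{equation*}
and we must show this equals $1$ when $y > 1$, equals $0$ when $y < 1$, and equals $1/2$ when $y = 1$. This is a standard computation, and I will handle the three cases separately by contour deformation.

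For the case $y > 1$, i.e.\ $t < R$, I would close the vertical segment with a rectangle to the \emph{left}, connecting $\sigma \pm iT$ to $-A \pm iT$ for some large $A > 0$. The only singularity of $y^{s}/s$ inside the rectangle is the simple pole at $s = 0$ with residue $1$, so by Cauchy's theorem the vertical contour at $\sigma$ equals $1$ plus the contributions from the other three sides. On the horizontal segments one has $|y^{s}/s| \le y^{\mu}/T$ for $s = \mu \pm iT$, which integrates to at most $y^{\sigma}/(T\,|\log y|)$ and vanishes as $T \to \infty$. On the left vertical segment at $\Re s = -A$ one has $|y^{s}/s| \le y^{-A}/A$, which vanishes as $A \to \infty$. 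So the limit equals $1 = \ind_{[0,R]}(t)$. For the case $y < 1$, i.e.\ $t > R$, the mirror argument works: close the rectangle to the \emph{right}, where now $y^{s}$ decays as $\Re s \to +\infty$ and no pole is enclosed, yielding the limit $0 = \ind_{[0,R]}(t)$.

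For the boundary case $y = 1$, i.e.\ $t = R$, direct evaluation suffices. Parametrize $s = \sigma + iu$ for $u \in [-T,T]$; the integral becomes
\begin{equation*}
\frac{1}{2\pi i} \int_{-T}^{T} \frac{i \diff u}{\sigma + iu} = \frac{1}{2\pi} \int_{-T}^{T} \frac{\sigma - iu}{\sigma^{2} + u^{2}} \diff u = \frac{1}{\pi} \int_{0}^{T} \frac{\sigma \diff u}{\sigma^{2} + u^{2}},
\end{equation*}
since the imaginary part is an odd integrand. This evaluates to $\tfrac{1}{\pi} \arctan(T/\sigma)$, which tends to $\tfrac{1}{2}$ as $T \to \infty$.

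The proof is essentially a bookkeeping exercise and there is no real obstacle; the only point requiring care is that one must use \emph{rectangular} rather than semicircular contours so that the horizontal error bars can be controlled uniformly by $y^{\sigma}/(T\,|\log y|)$, where the logarithm in the denominator is precisely what forces the separate treatment of $y = 1$ (where both the contour-closing trick breaks down and the indicator itself has a jump discontinuity). One may also cite \cite[(5.3),(5.5)]{MV06} for an identical argument in the classical setting of Dirichlet series.
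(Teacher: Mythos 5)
Your proof is correct and is exactly the standard contour-deformation argument that the paper defers to by citing \cite[(5.3),(5.5)]{MV06} (Perron's formula / the Mellin discontinuous integral), so you are reconstructing the same argument the authors had in mind rather than taking a different route. The only stylistic gap is that on the far left vertical segment you bound the integrand by $y^{-A}/A$ but should note that the segment has length $2T$, so the contribution is $O(T\,y^{-A}/A)$, which still vanishes as $A\to\infty$ for fixed $T$ --- one takes $A\to\infty$ first and then $T\to\infty$; the rest is exactly right, including the odd-integrand simplification in the $y=1$ case.
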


Therefore, to avoid worrying too much about $t=R$, from henceforth we will assume that $\ind_{[0,R]}(R) = \tfrac{1}{2}$.

\section{Hecke integration formula}
\label{se:hecke}
Let $f_{\rad} : \mathbb{R}_{>0} \rightarrow \mathbb{C}$ be a compactly supported smooth function.
One can then use this to define a compactly supported radial function  
$f: K_\mathbb{R} \rightarrow \mathbb{R}$ by taking
\begin{equation}
  f(x)  = f_{\rad}(\|x\|),
\end{equation}
where $\|x\|$ is as defined in Equation (\ref{eq:norm}). 
We can then recover the function $f(x)$ by Equation (\ref{eq:inverse_mellin}) as
\begin{equation}
  f(x) = 
  \frac{1}{2\pi i} \int_{\sigma - i \infty}^{\sigma + i \infty} \|x\|^{-s} \mell{f_{\rad}}(s) \diff s \text{ for any } \sigma > 0.
\end{equation}

Given an ideal lattice $\Lambda \subseteq K_\mathbb{R}$, 
one can consider the sum given by 
\begin{equation}
\sum_{v \in \Lambda \setminus \{0\}} f(v)    
 = 
{\sum_{v \in \Lambda \setminus \{0\}} 
  \frac{1}{2\pi i} \int_{\sigma - i \infty}^{\sigma + i \infty} \|v\|^{-s} \mell{f_{\rad}}(s) \diff s .}
  \label{eq:inverse_mellin_radial}
\end{equation}
We would like to pull out the integral outside the sum. However, to have that 
{$\sum_{v \in \Lambda \setminus \{0\}} \|v\|^{-s}$ } is absolutely convergent, we assume that $\sigma > d$ due to the following lemma.

\begin{lemma}
\label{le:abs_conv_epstein}
Given a lattice $\Lambda \subseteq \mathbb{R}^{d}$ and a suitable inner product on $\mathbb{R}^{d}$, the {following sum is absolutely convergent and defines a holomorphic function on the half-plane $\Re(s)> d$:
\begin{equation}
  \Eps(\Lambda,s) = \sum_{v \in \Lambda \setminus \{0\}} \|v\|^{-s}.
\end{equation}}
It admits an analytic continuation to $\mathbb{C} \setminus \{d\}$ with a pole at $s=d$ and residue:
\begin{equation}
	\Res_{s=d} \Eps(\Lambda,s) =d \cdot\Res_{s=1} \Eps(\Lambda,sd) = d  \cdot\frac{  \pi^{\frac{d}{2}}}{\Gamma(\tfrac{d}{2} + 1)} \cdot \frac{1}{\covol(\Lambda)}.
\end{equation}
The completed Epstein zeta function satisfies the following functional equation when $\covol(\Lambda)=1$.
\begin{align}
	\Eps^{*}(\Lambda,s) &=  {\pi^{-\frac{s}{2}}} { \Gamma(\tfrac{s}{2}) }\Eps(\Lambda,s),\\
	\Eps^{*}(\Lambda,s)  & = \Eps^{*}(\Lambda^{*},d-s).
	\label{eq:completed_epstein}
\end{align}
Here $\Lambda^{*} = \{y \in \mathbb{R}^d \mid \langle y , \Lambda \rangle \subseteq \mathbb{Z}\}$ is the dual lattice of $\Lambda$.
\begin{proof}
Absolute convergence and the value of the residue can be shown by counting lattice points $x \in \Lambda $ such that $\|x\| \leq R$ as $R \to \infty$. This is roughly the volume of a ball of radius $R$ normalized by the size of a fundamental domain of $\mathbb{R}^{d}/\Lambda$.

Analytic continuation and the functional equation follows from the Poisson summation formula. 
One can see all details worked out in Neukirch \cite{Neu13}.
\end{proof}
\end{lemma}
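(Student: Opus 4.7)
The plan is to treat the three claims in sequence: absolute convergence and the residue at $s = d$ follow from a Gauss-style lattice-point count, while the analytic continuation and the functional equation follow from the Riemann-Epstein theta-and-Poisson technique. Both ingredients are classical; the bulk of the content lies in the functional equation.

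For absolute convergence, set $N_\Lambda(R) = \card\{v \in \Lambda \setminus \{0\} \mid \|v\| \leq R\}$. The standard Gauss-Minkowski asymptotic gives
\begin{equation*}
N_\Lambda(R) = c_\Lambda R^d + O(R^{d-1}), \qquad c_\Lambda := \frac{\pi^{d/2}}{\Gamma(d/2+1)\,\covol(\Lambda)},
\end{equation*}
as $R \to \infty$. Partial summation rewrites $\Eps(\Lambda, s) = s \int_0^\infty N_\Lambda(R) R^{-s-1}\,dR$, which converges absolutely on $\Re(s) > d$. Subtracting the leading term $c_\Lambda R^d$ isolates a simple pole at $s = d$ with residue $d\,c_\Lambda$, matching the claim; the identity $\Res_{s=d}\Eps(\Lambda, s) = d\,\Res_{s=1}\Eps(\Lambda, sd)$ is then the pullback of residues under the substitution $s' = sd$.

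For the functional equation and the continuation, I would introduce the theta series $\theta_\Lambda(t) = \sum_{v \in \Lambda} e^{-\pi t \|v\|^2}$. Poisson summation applied to the Gaussian on $\Lambda$ yields
\begin{equation*}
\theta_\Lambda(t) = \frac{1}{\covol(\Lambda)\, t^{d/2}}\, \theta_{\Lambda^*}(1/t),
\end{equation*}
and the gamma integral $\pi^{-s/2}\Gamma(s/2)\|v\|^{-s} = \int_0^\infty e^{-\pi t \|v\|^2} t^{s/2-1}\,dt$, combined with Fubini on $\Re(s) > d$, gives $\Eps^*(\Lambda, s) = \int_0^\infty (\theta_\Lambda(t) - 1)\,t^{s/2 - 1}\,dt$. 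Splitting this at $t = 1$ and applying the theta relation to the piece on $(0,1)$ (using $\covol(\Lambda) = \covol(\Lambda^*) = 1$) produces the manifestly symmetric expression
\begin{equation*}
\Eps^*(\Lambda, s) = \int_1^\infty (\theta_\Lambda(t) - 1)\, t^{s/2 - 1}\,dt + \int_1^\infty (\theta_{\Lambda^*}(t) - 1)\, t^{(d-s)/2 - 1}\,dt + \frac{2}{s - d} - \frac{2}{s},
\end{equation*}
which is invariant under $(\Lambda, s) \leftrightarrow (\Lambda^*, d - s)$ and hence supplies both the meromorphic continuation of $\Eps^*(\Lambda, s)$ to $\mathbb{C}$ and the functional equation \eqref{eq:completed_epstein}. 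The apparent pole at $s = 0$ is cancelled by the pole of $\Gamma(s/2)$, so $\Eps(\Lambda, s)$ is holomorphic on $\mathbb{C} \setminus \{d\}$. The only real obstacle is routine bookkeeping: deriving the Poisson identity in the trace-form normalization, justifying the Fubini swap on $\Re(s) > d$, and observing that both tail integrals are entire in $s$ by the super-exponential decay of $\theta_\Lambda(t) - 1$ as $t \to \infty$ — all standard and carried out in Neukirch.
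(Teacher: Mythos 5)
Your proof is correct and takes exactly the route the paper sketches: a Gauss-style lattice-point count $N_\Lambda(R) = c_\Lambda R^d + O(R^{d-1})$ together with partial summation to establish absolute convergence on $\Re(s) > d$ and to compute the residue $d\,c_\Lambda$, and then the Riemann–Epstein theta-series/Poisson-summation argument (split the Mellin integral at $t = 1$ and apply the inversion $\theta_\Lambda(t) = \covol(\Lambda)^{-1} t^{-d/2}\theta_{\Lambda^*}(1/t)$) for the meromorphic continuation and functional equation, which is precisely the classical treatment the authors defer to Neukirch.
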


The function $\Eps(\Lambda,s)$ is called the Epstein zeta function of the lattice $\Lambda$. 
One can then get the following lemma.
\begin{lemma}
\label{le:contour_integral}
Let $\Lambda \subseteq \mathbb{R}^{d}$ be a unit covolume lattice and $f(x) = f_{\rad}(\|x\|)$ be a radial function on $\mathbb{R}^{d}$ for a compactly supported smooth function $f_{\rad}:\mathbb{R}_{>0}\rightarrow \mathbb{R}$. Then
\end{lemma}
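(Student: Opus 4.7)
The plan is to start from Equation~(\ref{eq:inverse_mellin_radial}) and justify interchanging the sum over $v \in \Lambda \setminus \{0\}$ with the contour integral, so as to obtain
\begin{equation}
\sum_{v \in \Lambda \setminus \{0\}} f(v) = \frac{1}{2\pi i} \int_{\sigma - i\infty}^{\sigma + i \infty} \mell{f_{\rad}}(s)\, \Eps(\Lambda,s)\, \diff s
\end{equation}
for any $\sigma > d$.

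First I would verify that $\mell{f_{\rad}}(s)$ decays rapidly along vertical lines. Since $f_{\rad}$ is smooth and compactly supported in $\mathbb{R}_{>0}$, repeated integration by parts in the defining integral $\mell{f_{\rad}}(s) = \int_0^\infty f_{\rad}(t) t^{s-1} \diff t$ gives, for every integer $N \geq 0$ and any compact vertical strip, a bound of the form $|\mell{f_{\rad}}(\sigma + it)| \ll_{f_{\rad},N,\sigma} (1+|t|)^{-N}$. Next, by Lemma~\ref{le:abs_conv_epstein} the Epstein zeta sum $\sum_{v \in \Lambda \setminus \{0\}} \|v\|^{-\sigma}$ is absolutely convergent for $\sigma > d$, so
\begin{equation}
\sum_{v \in \Lambda \setminus \{0\}} \int_{\sigma - i\infty}^{\sigma + i\infty} \bigl|\|v\|^{-s} \mell{f_{\rad}}(s)\bigr|\, |\diff s| \;\leq\; \Eps(\Lambda,\sigma) \cdot \int_{-\infty}^{\infty} |\mell{f_{\rad}}(\sigma + it)|\, \diff t \;<\; \infty.
\end{equation}

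With this double absolute integrability in hand, Fubini--Tonelli applies and lets us swap the sum and the integral in Equation~(\ref{eq:inverse_mellin_radial}), producing $\sum_{v} \|v\|^{-s}$ inside the integral, which is precisely $\Eps(\Lambda,s)$ by definition. This gives the claimed identity.

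The only substantive step is the rapid decay of $\mell{f_{\rad}}$ on vertical lines; everything else is bookkeeping via Lemma~\ref{le:abs_conv_epstein} and Fubini. I would expect the authors to state the identity with $\sigma > d$ as the hypothesis precisely because that is exactly the region where $\Eps(\Lambda,s)$ converges absolutely, and compact support of $f_{\rad}$ away from the origin is what makes $\mell{f_{\rad}}$ entire with Schwartz-class restriction to vertical lines, both of which are needed for the Fubini step.
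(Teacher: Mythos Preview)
Your proposal is correct and follows exactly the same approach as the paper: the paper's proof is a one-liner that simply cites absolute convergence of $\Eps(\Lambda,s)$ for $\Re(s)>d$ (Lemma~\ref{le:abs_conv_epstein}) together with Equation~(\ref{eq:inverse_mellin_radial}), which is precisely the Fubini interchange you spell out in detail. The only thing you omit is the trivial final substitution $s \mapsto sd$ that rewrites the condition $\sigma > d$ as $\sigma > 1$ and produces the factor of $d$ in front---the paper flags this explicitly as ``Note the change of variable from $s$ to $sd$.''
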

\begin{equation}
	\sum_{v \in \Lambda } f(v) 
	= \frac{d}{2\pi i} \int_{\sigma - i\infty}^{\sigma + i \infty}  
	\Eps(\Lambda,sd) \mell{f_{\rad}}(sd) \diff s \text{ for $\sigma  > 1$}.
\end{equation}
\begin{proof}
	Follows from absolute convergence of $\Eps(\Lambda,s)$ for $\Re(s)>d$ and Equation (\ref{eq:inverse_mellin_radial}). Note the change of variable from $s$ to $sd$.
\end{proof}

Before moving to the main theorem in this section, let us reestablish the so-called
``Hecke's trick'' \cite[Lemma 3.5]{K20}. 
\begin{lemma}
    \label{le:hecke_trick}
{For $\Re(s)>1$} we have that 
\begin{align}
\int_{a \in \KR^{(1)}} \|av\|^{-sd} \diffx a
& =  |\N(v)|^{-s} \cdot 
{  \frac{(2 \pi )^{r_2} \Gamma(\tfrac{s}{2})^{r_1} \Gamma(s)^{r_2} }{ 2^{sr_2} \cdot \frac{d}{2} \Gamma(\frac{sd}{2})  } }.
\end{align}
\end{lemma}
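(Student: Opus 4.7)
The proof proceeds by computing a Gaussian integral on $\KR^\times$ in two different ways and comparing. Here is the plan.

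First, I reduce to the case $|\N(v)|=1$. Writing $v = t_0\,v_0$ with $t_0 = |\N(v)|^{1/d}$ and $v_0 \in \KR^{(1)}$, and using the fact that $\|\cdot\|$ is homogeneous of degree $1$ (so $\|tv\| = t\|v\|$ for $t>0$), I pull out $t_0^{-sd} = |\N(v)|^{-s}$ from the integral. Then the change of variables $a \mapsto a v_0^{-1}$ is measure preserving on $\KR^{(1)}$ (as translation in a locally compact abelian group), so everything reduces to computing
\begin{equation*}
I(s) \,:=\, \int_{a \in \KR^{(1)}} \|a\|^{-sd}\,\diffx a.
\end{equation*}

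Next, I consider the auxiliary Gaussian integral
\begin{equation*}
J(s) \,:=\, \int_{\KR^\times} e^{-\pi \|x\|^2}\,|\N(x)|^{s-1}\,\diff x,
\end{equation*}
which is absolutely convergent for $\Re(s)>1$ (the complex factors carry $|z|^{2s-2}$ near $0$). I compute $J(s)$ in two ways. On the one hand, using the product decomposition $\KR \simeq \mathbb{R}^{r_1} \times \mathbb{C}^{r_2}$, the trace form splits as $\sum_{v\text{ real}} x_v^2 + 2\sum_{v\text{ complex}} |z_v|^2$, and the Lebesgue measure $\diff x$ is the product of $dx_v$ on real factors and $2\,du_v\,dv_v$ on complex factors (consistent with \cref{re:peculiar_norm}). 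Factoring and evaluating the standard Gaussian integrals place by place gives
\begin{equation*}
J(s) \,=\, \pi^{-s r_1/2}\,\Gamma(s/2)^{r_1}\,\cdot\,(2\pi)^{r_2}(2\pi)^{-s r_2}\,\Gamma(s)^{r_2}.
\end{equation*}

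On the other hand, writing $x = ta$ with $t = |\N(x)|^{1/d}$ and $a \in \KR^{(1)}$, so $\|x\| = t\|a\|$ and $|\N(x)|^{s-1} = t^{sd-d}$, and using the disintegration $\diff x = d\,t^{d-1}\,\diff t\,\diffx a$ from \cref{re:connection_between_measures}, I get
\begin{equation*}
J(s) \,=\, d \int_{\KR^{(1)}} \!\int_0^\infty e^{-\pi t^2\|a\|^2}\,t^{sd-1}\,\diff t\,\diffx a \,=\, \frac{d\,\Gamma(sd/2)}{2\,\pi^{sd/2}}\,I(s),
\end{equation*}
after substituting $u = \pi t^2\|a\|^2$ in the inner integral.

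Equating the two expressions for $J(s)$ and solving for $I(s)$, using $sd/2 = sr_1/2 + sr_2$ to simplify $\pi^{sd/2}\pi^{-sr_1/2} = \pi^{sr_2}$ and then $\pi^{sr_2}(2\pi)^{-sr_2} = 2^{-sr_2}$, yields the claimed identity. The only subtlety is ensuring absolute convergence justifying Fubini in both representations of $J(s)$; this is precisely where the hypothesis $\Re(s) > 1$ enters, and it is not really an obstacle but needs to be noted explicitly. Everything else is a careful bookkeeping exercise with $\pi$ and powers of $2$, which can be cross-checked on the easy cases $K=\mathbb{Q}$ and $K=\mathbb{Q}(i)$.
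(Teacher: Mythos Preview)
Your proposal is correct and follows essentially the same approach as the paper: compute a Gaussian integral on $\KR^\times$ in two ways---once via the product decomposition $\KR \simeq \mathbb{R}^{r_1}\times\mathbb{C}^{r_2}$ and once via the polar decomposition $x=ta$ with $a\in\KR^{(1)}$---and compare. The only cosmetic differences are that the paper uses $e^{-\|x\|^2}$ rather than $e^{-\pi\|x\|^2}$ (so slightly different $\pi$-bookkeeping that cancels in the end), and handles general $v$ by the substitution $x\mapsto xv$ in the Gaussian integral rather than by your translation $a\mapsto av_0^{-1}$ on $\KR^{(1)}$; both reductions are equivalent.
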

\begin{proof}
Let us prove it for $v=1$. 
Consider the integral
\begin{equation}
\label{eq:first_one}
  \int_{x \in \KR} e^{-\|x\|^{2}} |\N(x)|^{s-1} \diff x
  = \int_{x \in \KR^{\times}} e^{-\|x\|^{2}} |\N(x)|^{s}  |\N(x)|^{-1}\diff x.
\end{equation}
We can evaluate it in two ways. 
One way is to write the Haar measure on $|\N(x)|^{-1} \diff x$ on $\KR^{\times}$ by writing (see Remark \ref{re:connection_between_measures})
\begin{align}
	\KR^{\times}  & \simeq \mathbb{R}_{>0} \times \KR^{(1)} \\
	x  & = t \cdot a \\
	|\N(x)|^{-1} \diff x & =  \deg K \cdot \left( \tfrac{1}{t}\diff t  \right)\cdot \diffx a.
\end{align} 
Observe then that for $d=\deg K$ we have $\N(at) =t^{d}\N(a)$. Hence,
\begin{align}
  \int_{x \in \KR} e^{-\|x\|^{2}} |\N(x)|^{s-1} \diff x
  & = 
  \int_{t \in \mathbb{R}_{>0}}\int_{a \in \KR^{(1)}} e^{-t^2\|a\|^{2}} d t^{sd-1} \diffx a \diff t .\\
  & = 
  \int_{t \in \mathbb{R}_{>0}}e^{-t^2} d t^{sd-1} \diff t  \int_{a \in \KR^{(1)}} \|a\|^{-sd} \diffx a  \\
  & = 
  \tfrac{d}{2}\Gamma(\tfrac{sd}{2})
  \int_{a \in \KR^{(1)}} \|a\|^{-sd}\diffx a.
\end{align}
On the other hand, one can also write $x=(x_1,\dots,x_{r_1+r_2}) \in \mathbb{R}^{r_1} \times \mathbb{C}^{r_2}$. 
\begin{align}
  \int_{x \in \KR} &  e^{-\|x\|^{2}} |\N(x)|^{s-1} \diff x \\
  & = 
  \int_{x \in \KR} e^{-\left( \sum_{i=1}^{r_1}|x_i|^{2} +2 \sum_{i=r_1+1}^{r_2}|x_i|^{2} \right)} |x_1 x_2 \dots x_{r_1}|^{s-1} |x_{r_1+1} \dots x_{r_1+r_2}|^{2( s-1 )} \diff x\\
  & = 
\prod_{i=1}^{r_1}\left( 2\int_{x \in {\mathbb{R}_{> 0}} } e^{- x^{2}} x^{s-1} \diff x \right)
\prod_{i=r_1+1}^{r_1+r_2}\left( 2\cdot 2 \pi \int_{x \in {\mathbb{R}_{> 0}} } e^{- 2 x^{2}} x^{2(s-1)} x\diff x \right)\\
  & =  (2\pi)^{r_2} 2^{-sr_2} \Gamma(\tfrac{s}{2})^{r_1} \Gamma(s)^{r_2}.
\end{align}
The factor of $2$ appearing before $2\pi$ is a bit subtle. See Remark \ref{re:peculiar_norm}.

For $v \neq 1$, replace $x$ with $xv$ in the integral in Equation (\ref{eq:first_one}) and follow the same process.

\end{proof}

Suppose now if we equip $\Ar(K)$ with a Haar-probability measure, then one can ask what is the value of 
\begin{equation}
  \int_{\Lambda \in \Ar(K)}\left( \sum_{v \in \Lambda \setminus \{0\}} f(v)\right) \diff \Lambda .
\end{equation}

The answer can now be given by the classical theorem of Hecke. 
Hecke originally evaluated this integral for the case of $f(v) = \|v\|_2^{-s}$ to prove the meromorphic continuation of the Dedekind zeta function \cite{H1918}.
\begin{theorem}
\label{th:hecke_original}
{\bf (Hecke, 1918)}

Suppose $\Ar(K)$ is given the Haar-probability measure $\diff \Lambda$
and suppose that each $\Lambda \in \Ar(K)$ is normalized as a unit covolume lattice in $\KR$. 
Then, we have that 
\begin{equation} 
  \int_{\Lambda \in \Ar(K)}
  \Eps(\Lambda, ds) \diff \Lambda
  =
  \frac{
  \zeta_K({s})
}{   \vol(\Ar(K))}  
\cdot 
\frac{\pi^{\frac{sd}{2}}}
{ \Gamma(\frac{sd}{2})\cdot \frac{d}{2}} 
\cdot
\frac{ \Delta_K^{\frac{s}{2} } \left( \pi^{-\frac{s}{2}} \Gamma(\tfrac{s}{2})  \right)^{r_1}  \left(   (2\pi)^{-s}\Gamma(s)\right)^{r_2}   }
{(2\pi)^{-r_2}}
  .
\end{equation}
\begin{remark}
\label{re:sanity}
One way to sanity check that the above expression is correct is to know that the completed Dedekind zeta function
\begin{equation}
  \zeta^{*}_{K}(s) = 
{ \Delta_K^{\frac{s}{2}} \left( \pi^{-\frac{s}{2}} \Gamma(\tfrac{s}{2})  \right)^{r_1}  \left(   (2\pi)^{-s}\Gamma(s)\right)^{r_2}   } \zeta_{K}(s)
\end{equation}
must satisfy the function equation $\zeta_{K}^{*}(1-s) = \zeta_K^{*}(s)$. Then, using (\ref{eq:completed_epstein}), we know that the function $\zeta_{K}^{*}(s)^{-1} \int_{\Lambda \in \Ar(K)}  \Eps^{*}(\Lambda,d s) \diff \Lambda$ 
must be free of Gamma factors since $\Ar(K)$ is symmetric with respect to $\Lambda \mapsto \Lambda^{*}$. Hence, it is likely to be a constant and we can then chase this constant by taking $s \rightarrow 1$.
\end{remark}

\end{theorem}

\begin{proof}
{\bf (of \cref{th:hecke_original})}

Even though we had the Haar-random measure on $\Ar(K)$, for the proof
we will work with the natural measure on $\Ar(K)$ as described in Section \ref{se:arakelov_haar}. Also, we will assume that $\Re(s) > 1$ since this assumption is sufficient to show the equality.
Let $V_{C} = \vol(\Ar(K))$ for notational clarity. 

Denote by $\diffx a$ the measure on $\KR^{(1)}$ given in Equation (\ref{eq:haar_measure_on_KR1}).
Let $\Lambda \in \Ar(K)$ be an ideal lattice. Then $\KR^{(1)} \cdot \Lambda$ 
is the entire connected component that contains $\Lambda$. 
The stabilizer of the $\KR^{(1)}$-action is exactly 
$\OK^{\times} \subseteq \KR^{(1)}$. 
So, if $h = \card \Cl(K)$ and if $\Lambda_1,\dots,\Lambda_h$ is a set of ideal lattices, one from each ideal class, then after some simple folding-unfolding
\begin{align}
  \int_{\Lambda \in \Ar(K)} \Eps(\Lambda,sd)
  \diff \Lambda
  & =  \frac{1}{V_{C}} 
  \sum_{i=1}^{h} \int_{ a\in \KR^{(1)} /\OK^{\times}}
  \left( \sum_{v \in \Lambda_{i} \setminus \{0\}} \|av\|^{-sd}  \right) \diffx a\\
  & =  \frac{1}{V_{C}}
  \sum_{i=1}^{h} \int_{ a\in \KR^{(1)} / \OK^{\times} }\left( \sum_{u \in \OK^{\times}} \sum_{v \in ( \Lambda_{i} \setminus \{0\} )/\OK^{\times}}  \| a u v \|^{-sd}\right) \diffx a\\
  & =  \frac{1}{V_C} \sum_{i=1}^{h} \int_{ a\in \KR^{(1)} }\left(  \sum_{v \in ( \Lambda_{i} \setminus \{0\} )/\OK^{\times}}  \| a  v \|^{-sd} \right) \diffx a.
\end{align}

Then, we can bring in 
Hecke's trick from Lemma \ref{le:hecke_trick}
and get that if $v \in \KR^{\times}$ then
\begin{align}
\int_{a \in \KR^{(1)}} \|av\|^{-sd} \diffx a
& =  |\N(v)|^{-s} \cdot 
{  \frac{ (2\pi)^{r_2} \Gamma(\tfrac{s}{2})^{r_1} \Gamma(s)^{r_2} }{  2^{sr_2} \cdot \frac{d}{2} \Gamma(\frac{sd}{2})  } }.
\end{align}

Now without loss of generality, one can assume that $\Lambda_i \subseteq K$ is a fractional ideal. 
This means that $v \in \Lambda_{i} \setminus \{0\} \Rightarrow |\N(v)|\neq 0$. Note that $|\N(v)|= |\N(uv)|$ for any $u \in \OK^{\times}$ so it is well defined as a function on $( \Lambda_i \setminus \{0\} ) / \OK^{\times}$. Hence, we can write that 
\begin{equation}
  \int_{\Lambda \in \Ar(K)}  \Eps(\Lambda,sd)\diffx a = 
    \frac{1}{V_C} \sum_{i=1}^{h} \left(  \sum_{v \in ( \Lambda_{i} \setminus \{0\} )/\OK^{\times}}  |\N(v)|^{-s} \right) \cdot
{  \frac{ (2\pi)^{r_2} \Gamma(\tfrac{s}{2})^{r_1} \Gamma(s)^{r_2} }{  2^{sr_2}\frac{d}{2} \Gamma(\frac{sd}{2})  } }.
\end{equation}

Observe that 
\begin{equation}
	\sum_{i=1}^{h} \sum_{v \in ( \Lambda_{i} \setminus \{0\} )/\OK^{\times}}|\N(v)|^{s}  = \Delta_K^{\frac{s}{2}} \zeta_K({s}).
\end{equation}
Indeed, we have that for some ideal $\mathcal{I}_{i} \subseteq \OK$, 
\begin{equation}
  \Lambda_i =  \Delta_K^{-\frac{1}{2d}} \N(\mathcal{I}_i)^{-\frac{1}{d}} \mathcal{I}_i.
\end{equation}
So, for some $v \in \Lambda_i$, we have some $w \in \mathcal{I}_i$ such that 
$$v = \Delta_K^{-\frac{1}{2d}} \N(\mathcal{I}_i)^{-\frac{1}{d}}w \Rightarrow |\N(v)| =  |\N(w)| \cdot \Delta_K^{-\frac{1}{2}}  \cdot \N(\mathcal{I}_i)^{-1}.$$
Then, observe that as $w$ varies through the different representatives of $\mathcal{I}_i/\OK^{\times}$, we must have that $\N(w)/ \N(\mathcal{I}_i)$ goes through norms of different ideals lying in the ideal class of $\mathcal{I}_{i}^{-1}$.
This was shown in Lemma \ref{le:counting_ideals}.

\end{proof}

After this, we would like to use Lemma \ref{le:contour_integral} to average $\sum_{v \in \Lambda \setminus \{0\}}^{} f(v)$. This gives a completely explicit formula.
  \begin{corollary}
	  \label{co:contour_shifting}
  Consider the same setting as Theorem \ref{th:hecke_original}. Let $f_{\rad}:\mathbb{R}_{>0}\rightarrow \mathbb{R}$ be a compactly supported smooth function and $f:\KR \rightarrow \mathbb{R}$ be $f(x)=f_{\rad}(\|x\|)$ as before. Then
  \begin{equation}
	  \int_{\Lambda \in \Ar(K)} \left( \sum_{v \in \Lambda\setminus\{0\}}f(v) \right) \diff \Lambda =  
\int_{x \in \KR} f(x) { \diff x }+ \varepsilon(K,f),
  \end{equation}
  where 
\begin{equation}
	\varepsilon(K,f) = \frac{  (2\pi)^{r_2}}{ \vol(\Ar(K)) \pi i } \int_{\frac{1}{2} - i \infty}^{\frac{1}{2} + i \infty}  \Delta_K^{\frac{s}{2}}\zeta_{K}(s)\mell{f_{\rad}}(sd) \left( \frac{\Gamma(\frac{s}{2})^{r_1} \Gamma(s)^{r_2}}{ 2^{sr_2} \Gamma(\frac{sd}{2})}\right)\diff s
  \end{equation}
  \end{corollary}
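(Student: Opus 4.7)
The plan is to combine the contour integral representation of the radial sum from \cref{le:contour_integral} with Hecke's averaging formula \cref{th:hecke_original}, and then shift the contour to $\Re(s)=1/2$, extracting a residue at $s=1$ which will exactly match the main term $\int_{\KR} f(x)\diff x$.

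First, by \cref{le:contour_integral}, for $\sigma > 1$ we have
\begin{equation*}
\sum_{v \in \Lambda \setminus \{0\}} f(v) = \frac{d}{2\pi i} \int_{\sigma-i\infty}^{\sigma+i\infty} \Eps(\Lambda,sd)\,\mell{f_{\rad}}(sd)\diff s.
\end{equation*}
Integrating over $\Lambda \in \Ar(K)$ and swapping the order of integration (justified in $\Re(s)>1$ by absolute convergence of the Epstein zeta function as per \cref{le:abs_conv_epstein}, together with the rapid decay of $\mell{f_{\rad}}$ on vertical lines since $f_{\rad}$ is smooth and compactly supported away from $0$), I would apply the Hecke formula from \cref{th:hecke_original} to the inner integral. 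After the cancellation $\pi^{sd/2}\cdot (\pi^{-s/2})^{r_1}\cdot (2\pi)^{-sr_2}=2^{-sr_2}$ (using $d=r_1+2r_2$), this yields
\begin{equation*}
\int_{\Lambda\in\Ar(K)}\sum_{v\in\Lambda\setminus\{0\}}f(v)\diff \Lambda = \frac{(2\pi)^{r_2}}{\pi i\, \vol(\Ar(K))}\int_{\sigma-i\infty}^{\sigma+i\infty} \Delta_K^{s/2}\zeta_K(s)\mell{f_{\rad}}(sd)\,\frac{\Gamma(\tfrac{s}{2})^{r_1}\Gamma(s)^{r_2}}{2^{sr_2}\Gamma(\tfrac{sd}{2})}\diff s.
\end{equation*}

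Next, I would shift the contour from $\Re(s)=\sigma>1$ down to $\Re(s)=1/2$. The only singularity crossed is the simple pole of $\zeta_K$ at $s=1$ (the Mellin transform is entire, and the gamma quotient is regular in this strip). Using the analytic class number formula $\Res_{s=1}\zeta_K(s)=\vol(\Ar(K))/\sqrt{\Delta_K}$ from \cref{le:residue_of_dz}, together with $\Gamma(1/2)^{r_1}=\pi^{r_1/2}$, the residue contribution evaluates to
\begin{equation*}
\frac{(2\pi)^{r_2}}{\pi i\,\vol(\Ar(K))}\cdot 2\pi i\cdot \frac{\vol(\Ar(K))\,\pi^{r_1/2}}{2^{r_2}\Gamma(d/2)}\mell{f_{\rad}}(d)=\frac{2\pi^{d/2}}{\Gamma(d/2)}\mell{f_{\rad}}(d),
\end{equation*}
which, by the polar-coordinate identity together with the normalization of the trace Lebesgue measure discussed in \cref{re:peculiar_norm} (so that the unit ball in $\KR$ has volume $\pi^{d/2}/\Gamma(d/2+1)$), matches $\int_{\KR}f(x)\diff x$ exactly. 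The residual contour integral on $\Re(s)=1/2$ is then the claimed $\varepsilon(K,f)$.

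The main obstacle in the above plan is the justification of the contour shift: one needs adequate decay of the integrand along horizontal segments $[1/2+iT,\sigma+iT]$ as $|T|\to\infty$. While $\mell{f_{\rad}}(sd)$ decays faster than any polynomial in $|\Im(s)|$ (since $f_{\rad}$ is smooth with compact support) and $\zeta_K$ enjoys polynomial bounds of Phragm\'en--Lindel\"of type in vertical strips, the gamma quotient $\Gamma(\tfrac{s}{2})^{r_1}\Gamma(s)^{r_2}/\Gamma(\tfrac{sd}{2})$ must be controlled uniformly. By Stirling's formula, this quotient decays like $|\Im(s)|^{-(d-1)/2\cdot \Re(s)}$ times exponential factors that are actually bounded because $d = r_1 + 2r_2$ matches the gamma factor in the denominator; crucially, as flagged in the introduction, for sufficiently large $\deg K$ the decay becomes strong enough that the horizontal tails vanish in the limit. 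This quantitative control is exactly what is developed in \cref{se:hecke}, and I would invoke that machinery here to rigorously complete the shift and close the argument.
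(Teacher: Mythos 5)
Your proposal follows essentially the same route as the paper: combine \cref{le:contour_integral} with \cref{th:hecke_original}, interchange the $\Ar(K)$-integral and the contour integral using compactness, simplify the $\pi$-powers via $d = r_1 + 2r_2$, shift to $\Re(s)=\tfrac{1}{2}$, extract the residue at $s=1$ using \cref{le:residue_of_dz}, and recognize $\tfrac{2\pi^{d/2}}{\Gamma(d/2)}\mell{f_{\rad}}(d)=\int_{\KR}f(x)\diff x$ via polar coordinates. All of the algebra here is correct, including the residue computation.

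The one place where you overcomplicate matters is the final paragraph. You suggest that justifying the contour shift requires the quantitative gamma-factor estimates from \cref{se:hecke} and that large $\deg K$ is needed. That is not the case for this corollary, and the paper does not invoke that machinery here. Because $f_{\rad}$ is smooth and compactly supported, $\mell{f_{\rad}}(sd)$ decays faster than any polynomial on vertical lines; meanwhile $\zeta_K$ has at most polynomial growth in vertical strips (Phragm\'en--Lindel\"of), and the gamma quotient $\Gamma(\tfrac{s}{2})^{r_1}\Gamma(s)^{r_2}/\Gamma(\tfrac{sd}{2})$ has at most polynomial growth too since the exponential factors from Stirling cancel exactly (precisely because $r_1 + 2r_2 = d$ in the denominator). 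Super-polynomial decay in the Mellin transform then dominates and already closes the contour shift for \emph{all} $K$, with no degree restriction. The gamma estimates of \cref{le:gamma_ineq}, \cref{co:asymptots_on_mellin}, and the hypothesis $r_2\geq 4$ are only needed later, in \cref{th:hecke_indicatorfunction}, where $f_{\rad}=\ind_{[0,R]}$ is not smooth and $\mell{\ind_{[0,R]}}(s)=R^s/s$ decays only like $1/|t|$, so the gamma factors themselves must supply the extra decay.
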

\begin{remark}
\label{re:alternate}
There is perhaps a more elegant but less explicit version of the error term $\varepsilon(K,f)$. First, define the ``Arakelov-Mellin transform'' of $f:\KR \rightarrow \mathbb{R}$ to be 
\begin{equation}
	\mellK{f}(s) =  \int_{\KR}  f(x) |\N(x)|^{s-1} \diff x.
\end{equation}
Then, it turns out that for $f$ considered in Theorem \ref{th:hecke_original} and Corollary \ref{co:contour_shifting}, we have 
\begin{equation}
\label{eq:radial_K_mellin}
	\mellK{f} (s) =  \mell{f_{\rad}}(sd) \cdot \left( 2 (2\pi)^{r_2}\frac{\Gamma(\tfrac{s}{2})^{r_1} \Gamma(s)^{r_2}}{2^{s r_2}\Gamma(\tfrac{sd}{2})}\right) .
\end{equation}

One can then write that 
\begin{equation}
	\varepsilon(K,f) = \frac{1}{2 \pi i} \int_{\frac{1}{2} - i \infty}^{\frac{1}{2} + i \infty} \Delta_K^{ \frac{s}{2}} \zeta_K (s) \mellK{f}  (s) \diff s .
\end{equation}

\end{remark}

  \begin{proof}
{\bf (of \cref{co:contour_shifting})}
  
	  Using \cref{th:hecke_original} {and \cref{le:contour_integral}}, we can write
      $ \int_{\Lambda \in \Ar(K)} \left( \sum_{v \in \Lambda\setminus\{0\}}f(v) \right) { \diff \Lambda}$
      to be the following for $\sigma > 1$:
	  \begin{equation}  
	  \frac{ (2\pi)^{r_2} }{\vol(\Ar(K)) \cdot \frac{d}{2}}
	  \frac{d}{2 \pi i} 
	  \int_{\sigma - i \infty}^{\sigma + i \infty}  \Delta_K^{\frac{s}{2}}\zeta_{K}(s)\mell{f_{\rad}}(sd) \left( \frac{\Gamma(\frac{s}{2})^{r_1} \Gamma(s)^{r_2}}{ 2^{sr_2} \Gamma(\frac{sd}{2})}\right)\diff s.
	  \end{equation}
	  Note that here we interchanged the contour integration with the integration over $\Ar(K)$ which is allowed since $\Ar(K)$ is compact. Now observe that $f_{r}$ is a smooth function and therefore the Mellin transform $\mell{f_r}$ has excellent decay along the imaginary axis whenever $\sigma$ lies in a compact interval. 
	  Since the integrand on the right hand side analytically continues to $\sigma \leq 1$, we can shift our contour to $\sigma = \frac{1}{2}$ and while doing so, we pick a residue at $s=1$. 
	  
	  Using Lemma \ref{le:residue_of_dz}, we can get the value of this residue to be 
	  \begin{equation}
		  \frac{ (2\pi)^{r_2} }{\frac{d}{2}} \frac{ d \pi^{\frac{ r_1 }{2}}}{2^{r_2} \Gamma(\frac{d}{2})} \mell{f_{\rad}}(d) 
		  =  \frac{d \pi^{\frac{d}{2}}}{\Gamma(\frac{d}{2}+1)}\int_{\mathbb{R}_{>0}} f_{\rad}(t)t^{d-1} \diff t = \int_{\KR} f(x) \diff x.
	  \end{equation}
      Here we use the fact that the surface volume of a unit sphere in $d$ dimensions is $d\pi^{\frac{d}{2}}/\Gamma(\frac{d}{2}+1)$
  \end{proof}

\subsection{Estimates for gamma factors on the critical line}

Define
\begin{equation}
G(s) =\frac{ \Gamma(\tfrac{1}{2}s)^{r_{1}} \Gamma(s)^{r_{2}}}{ \Gamma(\frac{1}{2}d s)}
\end{equation}
We will prove the following lemma.

\begin{lemma}
  \label{le:gamma_estimate}
For some constant $c_{1}>0$ which does not depend on $r_{1},r_{2}$ and $t$, we have
\begin{equation}
	\log\left| {    G(\tfrac{1}{2} + i t) }\right|  
    \leq  - \left(\tfrac{1}{4} d \log d - c_{1} d \right)  - \tfrac{r_{1}+r_{2}-1}{4}\log\left( t^{2} + \tfrac{1}{4}\right)
\end{equation}
\end{lemma}
\begin{proof}

We begin by using the exact Stirling approximation formula which heolds true for $|\arg(s)|< \pi - \varepsilon$ for some $\varepsilon > 0$.
\begin{equation}
	\label{eq:stirling}
    \log  \Gamma(s) =  \left( s- \tfrac{1}{2}\right)   \log s - s + \tfrac{1}{2}\log (2 \pi) + E(s)
\end{equation}
where 
\begin{equation}
    E(s) = \int_{0}^{\infty} \frac{ 2\arctan(\frac{x}{s})}{e^{2 \pi x} - 1} \diff x.
\end{equation}

Our main interest is in estimating for $s= 1/2+it$
\begin{equation}
    \Delta(s) =  \Re( r_{1}E(s/2) + r_{2} E(s) - E(ds/2)),
\end{equation}
since the real part alone contributes to the absolute value of $G(s)$.

We know that 
\begin{equation}
\arctan s =  \frac{1}{2i} \log\left( \frac{1 + is}{ 1 - is}\right).
\end{equation}

Therefore, when $s = \sigma + it$ for $\sigma>0$, one has 
\begin{equation}
2 \Re(\arctan (x/s)) = 
\arg \left( \frac{\sigma + i(t+x)}{\sigma + i (t-x)} \right) 
= 
\arg \left( \frac{1 + i(t+x)\sigma^{-1}}{1 + i (t-x)\sigma^{-1}} \right) 
.
\end{equation}

Denote 
\begin{equation}
    a(x,t) = \arg\left(  \frac{1 + i(t+x)}{ 1 + i(t-x)} \right).
\end{equation}
This function therefore measures the angle subtended from the origin by a segment of length $2x$ between $1+i(t+x)$ and $1-i(t-x)$.
See~\cref{fig:angle}.

\begin{figure}
    \centering

\tikzset{every picture/.style={line width=0.75pt}} 

\begin{tikzpicture}[x=0.75pt,y=0.75pt,yscale=-1,xscale=1]

\draw  (17,233.9) -- (238,233.9)(39.1,35) -- (39.1,256) (231,228.9) -- (238,233.9) -- (231,238.9) (34.1,42) -- (39.1,35) -- (44.1,42)  ;
\draw  [dash pattern={on 4.5pt off 4.5pt}]  (110,47) -- (110,257) ;
\draw    (110,47) -- (39.1,233.9) ;
\draw    (110,181) -- (39.1,233.9) ;
\draw  [dash pattern={on 4.5pt off 4.5pt}]  (55,193) .. controls (62,192) and (72,198) .. (74.55,207.45) ;

\draw (122,148) node [anchor=north west][inner sep=0.75pt]   [align=left] {};
\draw (115,171.4) node [anchor=north west][inner sep=0.75pt]    {$1+i( t-x)$};
\draw (116,241.4) node [anchor=north west][inner sep=0.75pt]    {$1$};
\draw (118,31.4) node [anchor=north west][inner sep=0.75pt]    {$1+i( t+x)$};

\end{tikzpicture}

\caption{ The function $a(x,t)$ measures for $x>0$ the angle subtended at the origin. One can argue from this picture that $a(4x,2t) \geq a(2x,2t)$ for every $t \in \mathbb{R}$ and $x>0$.}
\label{fig:angle}
\end{figure}

Then, we have an expression for $\Delta(\frac{1}{2} + it)$  as
\begin{align}
	\Delta(\tfrac{1}{2}+it) 
	& =  \int_{0}^{\infty} \frac{r_{1}a(4x,2t) + r_{2} a(2x,2t) - a(\tfrac{4}{d} x, 2t) }{ e^{2 \pi x} - 1} dx \\
\end{align}

Observe that one has $a(4x,2t) \geq a(2x,2t)$ because the angle subtended is larger. Hence, we write
\begin{equation}
    \Delta\left(\tfrac{1}{2}+it\right) \leq 
    (r_{1}+r_{2}) 
    \int_{0}^{\infty}  \frac{a(4x,2t)}{e^{2 \pi x} - 1} \diff x
\end{equation}.

For $x \geq 0$, we estimate
\begin{equation}
    a(x,t) \leq 
    \begin{cases}
	\frac{2x}{1+t^{2} - x^{2}} & \text{ when } 
\frac{2x}{1+t^{2} - x^{2}}  \leq \pi \\
	\pi & \text{ otherwise }
    \end{cases}.
\end{equation}

The first inequality is just using that $\arctan x\leq x$ for positive $x$ and the second uses that $\arctan x \leq \pi/2$ for large values. We observe that for $x > 0$ one has 
\begin{equation}
x \leq -\frac{1}{\pi} + \sqrt{1+t^{2} + \frac{1}{\pi^{2}}}  \Leftrightarrow  \frac{2x}{1+t^{2}-x^{2}} \leq \pi
\end{equation}

Denote $$c(t) = -1/\pi + \sqrt{1 + t^{2} + 1/ \pi^{2} }.$$
Then, one estimates that 
\begin{equation}
   \frac{1}{r_{1}+r_{2}} \Delta\left(\frac{1}{2}+it\right) 
    \leq 
 \int_{0}^{ \tfrac{1}{4}c(2 t) } \frac{ (e^{2\pi x} -1)^{-1} 8x}{1+4 t^{2}-16 x^{2}} \diff x  +  \int_{\tfrac{1}{4}c(2t)}^{\infty} \frac{\pi}{ e^{2 \pi x} - 1} \diff x
\end{equation}

We estimate that $e^{2\pi x} - 1 \geq x$ for positive $x$. The first integral is then exactly solvable and we can estimate it as 
\begin{equation}
    \leq C \frac{ \ln( 2 + |t|)}{\sqrt{1+t^{2}}} \text{ for some }  C>0
\end{equation}
 and for the second we observe that $c(t) \geq C_1$ for some constant $C_{1} > 0$ therefore we can assume $(e^{2\pi x} - 1)^{-1} \leq C_{2} e^{-2 \pi x}$ for some $C_2$. Hence the second we get a bound of
\begin{equation}
    \leq C_{3} e^{-C_{4} t} \text{  for some  } C_{3},C_{4} > 0.
\end{equation}

Combined, this gives us that for all values of $r_{1},r_{2}$
\begin{equation}
    \Delta(\tfrac{1}{2}+it) \leq  (r_{1}+r_{2}) \tfrac{c \ln(2 + |t|)}{\sqrt{1+t^{2}}} \text{ for some } c>0.
\end{equation}
For our use, we can completely ignore the dependance on $t$ and write that 
\begin{equation}
    \Delta(\tfrac{1}{2}+it) \leq  (r_{1}+r_{2}) {c}\text{ for some } c>0.
\end{equation}

By using Stirling's approximation from~\cref{eq:stirling}, we get 
\begin{align}
    & \log |G(s)| 
    =  \Re( \log G(s)) \\
    = & \left( r_{1} \Re(\tfrac{s}{2} \log {s}) -   \tfrac{r_{1}}{2}(\log 2)\Re({s} )  \right) 
    + r_{2} \Re (s \log s) 
    - \left(\tfrac{d}{2} \Re(s \log s) + \tfrac{d}{2} (\log \tfrac{d}{2})\Re(s )  \right) \\ 
      & - \left(\tfrac{r_{1}+r_{2}-1}{2}\right) \Re(\log s) 
      + \tfrac{r_{1}}{2} \log 2 +  \tfrac{1}{2}\log (\tfrac{d}{2})  + \tfrac{r_{1}+r_{2} - 1 }{2} \log (2 \pi) 
      -(r_{1}+r_{2}-1)\Re(s)
 +
      \Delta(s) .
\end{align}
The terms involving $\Re(s \log s)$ cancel out.
We use the fact that $\Re(s) = 1/2$ which gives us $\Re(\log (s)) = \tfrac{1}{2}\log (t^{2} + 1/4)$. 
Then, using our estimate above
we get that 
\begin{align}
     \log |G(s)|  
     \leq &   
     \left( -\tfrac{1}{4} d \log d + c_{1} d \right)  - \tfrac{r_{1}+r_{2}-1}{4}\log\left( t^{2} + \tfrac{1}{4}\right),
\end{align}
where $c,c_{1} > 0$ are some constants.
\end{proof}

\begin{remark}
  When $r_{1}=0$, one can get a less sophisticated bound by employing
	the following classical identities along with $\Gamma(s+1)=s \cdot \Gamma(s)$:
	\begin{align}
	  |\Gamma(\tfrac{1}{2}+it)|^{2}
	  = \frac{2\pi}{e^{\pi t} + e^{-\pi t}}, \\
	  |\Gamma(1+it)|^{2}
	  = \frac{2\pi x}{e^{\pi t} - e^{-\pi t}}.
	\end{align}
	Unfortunately such nice expressions are not available for $\Gamma(\tfrac{1}{4} + it)$ and we have to do the above calculations when $r_{1}>0$.
\end{remark}

\begin{remark}
\label{re:sigma_not_half}
In general, from the proof we also get that for $ \sigma \geq \frac{1}{2}$, for some constant $C(r)$ we have 
\begin{equation}
\log |G(\sigma + it)|
\leq  -\tfrac{r_{1}+r_{2}-1}{4} \cdot \log (t^{2} + \sigma^{2}) +  C(r_{1},r_{2}),
\end{equation}
where $C(r_{1},r_{2})>0$ is some constant that depends on $r_{1},r_{2}$. Hence, the decay along the $t$-axis for a fixed $\sigma \geq 1/2$ is always of the order $ \sim t ^ {-\frac{r_{1}+r_{2}-1}{2}}$.
\end{remark}

\subsection{Estimates on Mellin transform}
We can use Lemma \ref{le:gamma_estimate}
to prove the following corollary.

\begin{corollary}
\label{co:asymptots_on_mellin}
 Let $g = \ind_{[0,R]}$ be the indicator function of an interval. There exists a positive constant $C>0$ such that for all $(r_{1},r_{2}) \in(\mathbb{Z}_{>0})^{2}$ and for all $t \in \mathbb{R}$ the following inequality holds.
\begin{equation}
	    \left| 
	    \mell{\ind_{[0,R]}}\left( d \cdot ( \tfrac{1}{2} + it )\right) 
	    \left(\frac{ \Gamma(\tfrac{1}{4} + i \tfrac{t}{2})^{r_{1}} \Gamma({\frac{1}{2} + it})^{r} }{ 2^{\left({\frac{1}{2} + it}\right) r}\Gamma \left( \tfrac{d}{2}\left(\frac{1}{2} + it\right)\right)}\right) 
	    \right|
\leq 
\frac{R^{\frac{d}{2}} e^{-\frac{1}{2}d\log d+ C d }}{\left(|t|+ 1 \right)^{\frac{r_{1}+r_{2}-2}{2} }}.
 \end{equation}
 Here $d = r_{1}+2r_{2}$.
\end{corollary}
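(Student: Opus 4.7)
The plan is to combine the explicit formula for $\mell{\ind_{[0,R]}}$ from Equation (\ref{eq:inverse_mellin}) with the gamma-ratio bound from Lemma \ref{le:gamma_ineq}, and then collect factors. There is nothing substantially new beyond bookkeeping: the two non-trivial ingredients have already been established, and what remains is to check that the exponents in $(|t|+1)$ add up to $(r+1)/2$ and that every stray $r$-dependent factor (powers of $2$, factors of $r$, constants) fits inside the $e^{Cr}$ slack.

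First, observe that $\mell{\ind_{[0,R]}}(s) = R^{s}/s$ by Equation (\ref{eq:inverse_mellin}). Specializing at $s = 2r(\tfrac12 + it) = r + 2irt$ gives
\begin{equation*}
\left|\mell{\ind_{[0,R]}}\!\left(2r(\tfrac12+it)\right)\right| \;=\; \frac{R^{r}}{\,2r\,\bigl|\tfrac12+it\bigr|\,} \;\leq\; \frac{C_0\,R^{r}}{r\,(|t|+1)}
\end{equation*}
for an absolute constant $C_0$, using the elementary inequality $|\tfrac12+it| \geq \tfrac14(|t|+1)$. Second, the trivial identity $|2^{(\frac12+it)r}| = 2^{r/2}$ contributes only an $e^{-(\log 2)r/2}$ factor, which is harmless. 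Third, Lemma \ref{le:gamma_ineq} gives
\begin{equation*}
\left|\frac{\Gamma(\tfrac12+it)^{r}}{\Gamma(r(\tfrac12+it))}\right| \;\leq\; \frac{e^{-\tfrac12 r\log r + Cr}}{(|t|+1)^{(r-1)/2}}.
\end{equation*}

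Multiplying these three estimates, the $(|t|+1)$ powers combine as $\tfrac{r-1}{2} + 1 = \tfrac{r+1}{2}$, yielding exactly the denominator in the desired bound. All the remaining factors, namely $C_0/r$, $2^{-r/2}$, and the $e^{Cr}$ from Lemma \ref{le:gamma_ineq}, are each bounded by $e^{C'r}$ for a new absolute constant $C'$ (in fact the $1/r$ and $2^{-r/2}$ only improve the bound). Renaming $C'$ to $C$ gives the claim. The only step that requires any care is ensuring that $|\tfrac12+it|^{-1}$ contributes at least one factor of $(|t|+1)^{-1}$; this is immediate from the inequality mentioned above, and is the only place where we actually gain the extra power beyond what Lemma \ref{le:gamma_ineq} already provides. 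Thus no serious obstacle arises, and the corollary follows as a direct combination of the previous lemma with the explicit Mellin transform of an indicator.
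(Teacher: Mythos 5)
Your proposal is correct and takes essentially the same route as the paper's proof: compute $\mell{\ind_{[0,R]}}$ explicitly at $s = 2r(\tfrac12+it)$, invoke Lemma \ref{le:gamma_ineq}, note that the extra $|\tfrac12+it|^{-1}$ supplies one more power of $(|t|+1)^{-1}$, and absorb the residual $r$-dependent constants (the $2^{-r/2}$, the $1/r$, and the $e^{Cr}$) into the exponential slack. The paper phrases it more tersely (writing $|r+2irt| = r\sqrt{1+4t^2}$ directly and remarking that all exponential-in-$r$ factors are absorbed), but the content is identical.
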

\begin{proof}
We observe that 
\begin{align}
	\left| \mell{\ind_{[0,R]}}\left( d \cdot (\tfrac{1}{2} + it )\right)\right|
	& =  \left| \frac{ R^{ \frac{d}{2} + i  td } }{ \frac{d}{2} + i t d }\right| =
\frac{R^{\frac{d}{2}}}{ \frac{d}{2} \cdot \sqrt{1 + 4t^{2}}} .
\end{align}
Then plugging in Lemma \ref{le:gamma_estimate} does the job. Everything that grows at most exponentially in $d$ can get absorbed in the ${ \exp ( -  \frac{1}{2} d \log d + C r )}$ factor upto readjusting $C>0$.
\end{proof}

\subsection{Hecke integration formula with the indicator function}

\label{ss:hif_ind}

The following is the main tool used in the proof of \cref{th:main}. We write the error term as an exact contour integral for now. The noteworthy matter here is that contour shifting is now possible because of the estimates on gamma functions.
\begin{corollary}
\label{th:hecke_indicatorfunction}
Let $\Lambda \in \Ar(K)$ be as in Theorem \ref{th:hecke_original}.
Let $K$ be a number field of signature $(r_{1},r_{2})$.
Let $B_{R} \subseteq 
\KR$ be an origin-centered ball of radius $R$
with respect to the trace form in (\ref{eq:norm}). 
Then, assuming~\cref{hy:lind} for exponents $\eta_{0},\eta_{1},\eta_{2}$ as in the~\cref{eq:lind}
and assuming that $r_{1} \eta_{1} +2 r_{2} \eta_{2} > 2$, one has
\begin{equation}
  \int_{\Lambda \in \Ar(K)} \card \left( B_{R}\cap \Lambda \right) \diff \Lambda = 
  1+ \vol(B_R)
  + \varepsilon(R,K).
  \end{equation}
  where 
  \begin{equation}
	  \varepsilon(R,K) =
	  \frac{  (2\pi)^{r_2}}{ \vol(\Ar(K)) \pi i } \int_{\frac{1}{2} - i \infty}^{\frac{1}{2} + i \infty}  \Delta_K^{\frac{s}{2}}\,\zeta_{K}(s)\,\frac{R^{s d}}{s d} \left( \frac{ \Gamma( \frac{s}{2})^{r_{1}} \Gamma(s)^{r_2}}{ 2^{sr_2}\, \Gamma(\frac{sd}{2})}\right)\diff s.
	  \label{eq:integran}
\end{equation}
Here the $\tfrac{1}{2}$ may be replaced by any other real in the interval $( \frac{1}{2} , 1 )$.
\end{corollary}
\begin{proof}
	We want to evaluate the value of 
	\begin{equation}
		\int_{\Lambda \in \Ar(K)} \left(  \sum_{v \in \Lambda} \ind_{[0,R]}(\|v\|) \right) \diff \Lambda.
	\end{equation}
	Note that this is not sensitive to whether $\ind_{[0,R]}(R)$ is $1$ or $\tfrac{1}{2}$ since the lattices where this number might differ are of zero measure in $\Ar(K)$. 
	If the absolute value of the integrand of~\cref{eq:integran} as a function of $s= \tfrac{1}{2} + it$ has an asymptotic growth rate of $|t|^{-(1 + \epsilon)}$ for some $\epsilon> 0$, 
	it permits us to use the contour shift argument of Corollary \ref{co:contour_shifting}.

	We observe that because of~\cref{hy:lind}, we know that $|\zeta_{K}(\tfrac{1}{2} + it)| \sim |t|^{ r_{1} (\frac{1}{2} - \eta_{1}) + 2r_{2} (\frac{1}{4} - \eta_{2}) }$ with $\eta_{1},\eta_{2} > 0$ as $|t| \rightarrow  \infty$.
	On the other hand, we know that because of Remark \ref{re:sigma_not_half} and Corollary \ref{co:asymptots_on_mellin}, 
	the rest of the terms in~\cref{eq:integran} grow with an asymptotic rate of $|t|^{-(\frac{r_{1}+r_{2}-1}{2})}$. 
	So $\eta_{1} r_{1} + 2 \eta_{2} r_{2} > 2$ is a sufficient condition to ensure that contour shifting could happen.
\end{proof}

\begin{remark}
  Under the best possible value of $\eta_{2} = \tfrac{1}{4} - \varepsilon$ for some $\varepsilon > 0$ under the Generalized Riemann Hypothesis, one knows that~\cref{co:contour_shifting} holds cyclotomic fields of degree $5$ or more.
\end{remark}

\section{Proof of \texorpdfstring{\cref{th:general}}{the general theorem}}
\label{se:proof}

\begin{proof}
	{\bf (of \cref{th:general})}
    
	Let $R$ be defined as the radius of a ball of volume $V$ in $K \otimes \mathbb{R}$. 
We have $d= r_1 + 2r_2$.
	We can solve for $R$ using
\begin{equation}
	V = \frac{\pi^{\frac{1}{2}  d}}{   \Gamma( 1 + \tfrac{1}{2}d) } R^{d} \Leftrightarrow  R^{ \frac{d}{2}}  
	= \frac{\sqrt{\Gamma(1+\tfrac{1}{2} d )}}{ \pi^{\frac{1}{4}d } } \sqrt{V}.
	\label{eq:accomodate}
\end{equation}
Then Stirling's approximations shows the following dependence between $R$ and $V$ for all $d \in \mathbb{Z}_{\geq 1}$
\begin{equation}
	R^{ \frac{d}{2}} \leq  
	e^{ \frac{1}{4} d  \log d  + c \cdot  d} \sqrt{V}  
 {\quad \text{ for some }} c      >0.
\end{equation}

From Corollary~\ref{th:hecke_indicatorfunction}, we want to show an upper bound on the error term $\varepsilon(V,K) = \varepsilon(R,K)$.
Using Lemma~\ref{le:residue_of_dz}, we can can write that 
\begin{align}
	  \varepsilon(R,K)
	  & =  \frac{  (2\pi)^{r_2}}{ \vol(\Ar(K)) \pi i } 
	  \int_{\frac{1}{2} - i \infty}^{\frac{1}{2} + i \infty}  
	  \Delta_K^{\frac{s}{2}}\zeta_{K}(s) \frac{R^{s d}}{s d} \left( \frac{   \Gamma(\frac{s}{2})^{r_{1}}\Gamma(s)^{r_2}}{ 2^{sr_2} \Gamma({\frac{  s d}{2}  })}\right)\diff s.\\
	  & =  \frac{  (2\pi)^{r_2}}{  \sqrt{\Delta_K} \Res_{s=1} \zeta_{K}(s) \pi i } \int_{- \infty}^{ \infty}  \Delta_K^{\frac{1}{4} + i\frac{t}{2}}\zeta_{K}(\tfrac{1}{2}+it) \frac{R^{ \frac{d}{2}(1+2it)}}{{d} \cdot (\tfrac{1}{2}+it)} \left( \frac{ \Gamma(\tfrac{1}{4} + i \tfrac{t}{2})^{r_{1}}  \Gamma(\frac{1}{2}+it)^{r_2}}{ 2^{r_2(\frac{1}{2}+it)} \Gamma({ \frac{d}{2}(\frac{1}{2}+it)})}\right)\diff t.\\
	  & =  
	  \frac{ \Delta_K^{-\frac{1}{4}}  (2\pi)^{r_2} 2^{-\frac{1}{2}r_2} R^{\frac{d}{2}}}{   {d} \cdot \pi i \cdot \Res_{s=1} \zeta_{K}(s)  } \int_{- \infty}^{ \infty}  \Delta_K^{ i\frac{t}{2}}\zeta_{K}(\tfrac{1}{2}+it) \frac{R^{it d }}{(\frac{1}{2}+it)} \left( \frac{ \Gamma(\tfrac{1}{4} + i \tfrac{t}{2})^{r_{1}}   \Gamma(\frac{1}{2}+it)^{r_2}}{ 2^{r_2it} \Gamma({ \frac{d}{2}(\frac{1}{2}+it)})}\right)\diff t .\\
\end{align}

Observe that for any $\epsilon>0$, $| \Res_{s=1} \zeta_K(s) |^{-1}$ is bounded by $C\cdot \Delta_{K}^{\epsilon}$ 
for some $ C > 0$.
in accordance with
Hypothesis \ref{hy:residue_estimate}.
Therefore, we can write that 
\begin{align}
	|\varepsilon(R,K)|
	& \leq 
	C 
	\Delta_{K}^{-\frac{1}{4} + \epsilon}   
	R^{\frac{d}{2}}
	\int_{-\infty}^{ + \infty} 
 \left| \frac{\zeta_K(\tfrac{1}{2} + it)}{(\frac{1}{2}+it)}  
 \frac{ \Gamma(\frac{1}{4} + i \frac{t}{2})^{r_{1}} \Gamma(\frac{1}{2} + it)^{r_2}}{  \Gamma( \frac{d}{2}(\frac{1}{2}+it))} \right| \diff t  \\
	& \leq 
 e^{ \frac{1}{4}d \log d  + cd }
	\Delta_K^{-\frac{1}{4} + \epsilon}
	\sqrt{V} \cdot
	\int_{-\infty}^{ + \infty} 
 \left| \frac{\zeta_K(\tfrac{1}{2} + it)}{(\frac{1}{2}+it)}  \frac{\Gamma(\frac{1}{4} + i \frac{t}{2})^{r_{1}} \Gamma(\frac{1}{2} + it)^{r_2}}{\Gamma(r_2 (\frac{1}{2}+it))} \right| \diff t  .
 \label{eq:to_bound}
\end{align}
Here we may have to readjust $c$ to accommodate the constant $C>0$.

We know from Corollary \ref{co:asymptots_on_mellin} that for some $c'>0$
\begin{equation}
 \left| \frac{1}{\frac{1}{2}+it} \cdot  \frac{\Gamma(\frac{1}{4} + i \frac{t}{2}) \Gamma(\frac{1}{2} + it)^{r_2}}{\Gamma(r_2 (\frac{1}{2}+it))} \right| 
 \leq 
\frac{ e^{- \frac{1}{4} d  \log d + c' d  } }{ (|t|+1)^{ \frac{1}{4} d +  \frac{1}{2} }}. \end{equation}

Hence we can write using Hypothesis \ref{hy:lind} that up to readjusting $c'$
\begin{align}
	\int_{-\infty}^{ + \infty} 
 \left| \frac{\zeta_K(\tfrac{1}{2} + it)}{ \frac{1}{2}+it}  \frac{ \Gamma(\frac{1}{4} + i \frac{t}{2})  \Gamma(\frac{1}{2} + it)^{r_2}}{\Gamma(r_2 (\frac{1}{2}+it))} \right| dt  
& \leq
\Delta_K^{\frac{1}{4} - \eta_0 + \epsilon}
e^{{-\frac{1}{4} d \log d  + c' d}}
\int_{-\infty}^{\infty}
\frac{(1+|t|)^{  \left(\frac{1}{2}- \eta_1 \right)r_{1} +  ( \frac{1}{4}-\eta_{2} )2 r_{2}} }{ (1+|t|)^{\frac{1}{2}(r_{1}+r_{2}-1) }} \diff t ,\\
& \leq
\Delta_K^{\frac{1}{4} - \eta_0 + \epsilon}
e^{{ -\frac{1}{4} d \log d  + c_{1}d}} , \text{ for some } c_{1} > 0.
\label{eq:bound}
\end{align}
since $\eta_{1},\eta_2 > 0 $. 
This estimate follows from substituting (\ref{eq:bound}) in (\ref{eq:to_bound}):
	\begin{equation}
	 \label{eq:using_hy_disc_final}
    \varepsilon(R,K)\leq\sqrt{V}\cdot\Delta_K^{-\eta_0 + \epsilon}\cdot e^{ c d}, \text{ for some } c> 0.     
	\end{equation}

Finally,~\cref{hy:disc} implies 
that there for any $c > 0$ and any $\epsilon > 0$
$\Delta_{K}^{- \epsilon} e^{c d } \leq C(\epsilon)$ for some constant $C(\epsilon)$ that does not depend on $K \in \mathcal{S}$.
So we can write that 
$$\varepsilon(R,K)\leq C(\epsilon) \sqrt{V}\cdot\Delta_K^{-\eta_0 +  2 \epsilon}, \text{ for some } C(\epsilon)> 0. $$
Now the statement of Theorem~\ref{th:main} follows for any positive $\eta<\eta_0$ by taking $\epsilon > 0$ to be small enough.
\end{proof}

\section{Properties of cyclotomic fields}

\label{se:cyclo_satisfy}

As promised before, we will derive~\cref{hy:lind},
\ref{hy:disc} and \ref{hy:residue_estimate} for the case of $K$ being a cyclotomic number field.
\begin{proposition}
The statements of~\cref{hy:lind},~\cref{hy:disc} and~\cref{hy:residue_estimate} are true for $K$ varying in the set of cyclotomic number fields.
\end{proposition}
\begin{proof}
\begin{enumerate}

    \item 
\subsubsection*{Subconvexity}

Since $r_{1}=0$ for cyclotomic fields, $\eta_{1}$ does not matter.
The following theorem
from \cite{PY23} is the best possible known result toward the required subconvexity
according to the knowledge of the authors.
It shows that $\eta_0,\eta_2$ arbitrarily close to $\tfrac{1}{12}$ are possible. 
\begin{theorem}
\label{th:subconvex_dirichlet} 
{\bf (Petrov-Young, 2023)}
Let $L(s,\chi)$ be the Dirichlet $L$-function of primitive character $\chi$ with conductor $q=q(\chi)$. 
For any $\varepsilon > 0$, there exists a universal constant $C(\varepsilon) > 0$ independent of $q$ or $\chi$
such that for every $\chi$ and every $t \in \mathbb{R}$ one has 
\begin{equation}
|L(\tfrac{1}{2}+it,\chi) | \leq C(\varepsilon) \cdot q^{\frac{1}{6} + \varepsilon}(|t|+1)^{\frac{1}{6}+ \varepsilon} .
\label{eq:subconvexity_dirichlet}
\end{equation}
\end{theorem}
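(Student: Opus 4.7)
The plan is to establish a hybrid Weyl-strength bound by combining the classical $t$-aspect argument of Hardy-Littlewood-Weyl with its $q$-aspect analogue. First I would apply the approximate functional equation to write
\[
L(\tfrac12+it,\chi) \;=\; \sum_{n} \frac{\chi(n)}{n^{1/2+it}} V\!\left(\tfrac{n}{N}\right) \;+\; \varepsilon(\chi,t) \sum_{n} \frac{\overline{\chi(n)}}{n^{1/2-it}} W\!\left(\tfrac{n}{N'}\right) \;+\; \text{(negligible)},
\]
with $NN' \asymp q(|t|+1)$ balanced so that $N \asymp (q(|t|+1))^{1/2}$. By dyadic decomposition and partial summation the problem reduces to bounding, uniformly for $M \leq N$,
\[
T(M) \;=\; \sum_{n} \chi(n)\, n^{-it}\, \phi(n/M),
\]
with the Weyl-strength target $T(M) \ll M^{1/2}\,(q(|t|+1))^{1/6+\varepsilon}$.

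The main step would be to apply Weyl differencing twice (the $A^{2}B$-process of van der Corput), expressing $|T(M)|^{4}$ as an average over shifts $h_{1},h_{2}$ of sums of the form
\[
\sum_{n} \chi(n)\,\overline{\chi(n+h_{1})}\,\chi(n+h_{2})\,\overline{\chi(n+h_{1}+h_{2})} \,\exp\!\left(it\,\log\tfrac{n(n+h_{1}+h_{2})}{(n+h_{1})(n+h_{2})}\right).
\]
Poisson summation modulo $q$ then converts the inner character sum into a complete exponential sum, to which one applies the Weil bound (or Deligne in the multi-dimensional case) for the $q$-aspect gain, while the smooth oscillatory factor $e^{it\log(\cdot)}$ is handled by stationary phase and the third-derivative test for the $t$-aspect gain. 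Choosing the dyadic range of $h_{1},h_{2}$ so that these two mechanisms reinforce each other yields, after careful bookkeeping, the exponent $1/6+\varepsilon$ simultaneously in $q$ and in $|t|+1$.

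The hardest part — and the real content of \cite{PY23} — is achieving a genuine \emph{hybrid} saving rather than interpolating between two separate Weyl bounds: shift ranges optimal for the $t$-aspect typically destroy the completeness needed for the Weil bound modulo $q$, and the degenerate contributions ($h_{1}h_{2}\equiv 0\pmod q$, or diagonal terms after Poisson) must be estimated by hand so that they do not dominate. If the differencing route turns out to be too delicate to balance, a fallback is the Conrey-Iwaniec cubic-moment method: embed $L(\tfrac12+it,\chi)$ into an average $\sum_{\psi \bmod q} |L(\tfrac12+it,\psi)|^{3}$, expand via an approximate functional equation, and use the Kuznetsov formula together with bounds on shifted convolution sums and third moments of $GL_{2}$ $L$-functions to extract pointwise information through positivity. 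This spectral route is heavier but bypasses the combinatorial balancing that is the principal obstacle in the elementary approach.
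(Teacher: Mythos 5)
Your proposal attempts to prove Theorem~\ref{th:subconvex_dirichlet} from scratch, but this theorem is not proved in the paper at all: it is imported as a black box from Petrow--Young \cite{PY23} (note also that the authors' name is ``Petrow,'' not ``Petrov''). The paper's only use of it is to multiply the bound over all characters of $\Gal(K/\mathbb{Q})$ and invoke the conductor--discriminant formula to deduce Hypothesis~\ref{hy:subconvexity} for cyclotomic fields; no proof of the $\GL_1$ subconvexity bound itself is supplied or contemplated.

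On the substance of your sketch, the labelling of primary route and fallback is reversed relative to the actual state of the art. The van der Corput $A^{2}B$-process plus Poisson modulo $q$ plus Weil that you put forward as the main argument does \emph{not} reach the exponent $q^{1/6}$ in the conductor aspect for general $\chi$; this is precisely why the $q$-aspect Weyl bound was stuck at Burgess' $q^{3/16}$ for half a century, and you correctly identify the obstruction yourself (shift ranges tuned for the $t$-aspect destroy the completeness modulo $q$). What you offer as a fallback is in fact the actual strategy of Petrow--Young: they bound a cubic moment of Dirichlet $L$-functions averaged along a coset of characters (extending Conrey--Iwaniec), apply the Petersson/Kuznetsov trace formula, control the resulting shifted convolution sums and a fourth moment of $\GL_2$ $L$-functions, and extract the pointwise Weyl-quality bound by positivity. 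Even as a sketch of that route, your outline omits the genuine innovations of \cite{PY23} --- the coset averaging that handles arbitrary (not merely cube-free) conductors and the delicate bookkeeping of Eisenstein newforms and oldform contributions --- which is where the actual content lies. So the proposal is not a proof, its primary mechanism is known not to reach the claimed exponent, and in any case the paper under review never proves this statement; it cites it.
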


Recall the following equalities for an Abelian number field $K$ should give us a subconvexity bound for $\zeta_K$.
\begin{align}
\zeta_K(s) = \prod_{\chi: \Gal(K) \rightarrow \mathbb{C}^{\times}} L(s,\chi), \\
|\Delta_K| = \prod_{\chi: \Gal(K) \rightarrow \mathbb{C}^{\times}} q(\chi).
\end{align}
So to get the estimate in Hypothesis \ref{hy:lind}, we just multiply 
Equation (\ref{eq:subconvexity_dirichlet}) across all characters of $\Gal(K)$.

\item  \subsubsection*{Lower bound on the discriminant}

This follows from the following explicit formula for the cyclotomic number field \cite{W12}.
\begin{equation}
| \Delta_K |^{\frac{1}{\varphi(n)}} = \frac{n}{  \prod_{p \mid n} p^{\frac{1}{p-1}} }.
\end{equation}
Let $\nu_p(n)$ be the $p$-adic valuation of $n$.
Then, the left hand side of \cref{hy:disc} follows immediately whereas the right side follows by observing
\begin{align}
	\log n - \sum_{p \mid n} \frac{\log p}{p-1} 
	& =  \sum_{p \mid n }
	\left(\nu_{p}(n) - \frac{1}{p-1}\right) \log p.
\end{align}
We observe that 
\begin{equation}
  \nu_p(n) - \frac{1}{p-1} \geq
  \begin{cases}
  \frac{1}{2} \nu_p(n) & \text{if } p \neq 2 \\
 \frac{1}{2} \nu_p(n) - 1 &  \text{if } p =2  ,
  \end{cases}
\end{equation}
  which gives the lower bounds 
  \begin{equation}
	\sum_{p \mid n}\left(\nu_{p}(n) - \frac{1}{p-1}\right) \log p \geq 
  \frac{1}{2}\log n  - \log 2 \gg \log n.
  \end{equation}
  Hence, we establish that 
  \begin{equation}
	  \frac{\log \Delta_{K}}{ \deg K} \rightarrow  \infty
  \end{equation}
  as required.

\item 
\subsubsection*{Lower bound on residues of Dedekind zeta function}

Since cyclotomic number fields are normal (Galois, in particular), one can use the Brauer-Siegel theorem to justify that 
~\cref{hy:disc} is true. In fact, one can make more precise asymptotics due to the effective version of the Brauer-Seigel theorem due to Stark~\cite{S1974}.

\begin{theorem}
\label{th:stark}
{\bf (Stark, 1974)}

Let $K$ be a normal extension of $\mathbb{Q}$. Then
\begin{equation}
	\label{eq:stark_equation}
  \Res_{s=1} \zeta_K(s) \geq  \frac{c_1}{\deg K\cdot |\Delta_K|^{\frac{1}{\deg K}}} \text{ for some }c_1 > 0.
\end{equation}
\end{theorem}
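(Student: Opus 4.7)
The strategy follows Stark's original argument, whose main aim is to extract an effective (constant-explicit) lower bound for the Dedekind zeta residue, improving on the (ineffective) Brauer--Siegel theorem. The crux is to rule out a Siegel (exceptional real) zero of $\zeta_K(s)$ effectively, and the normality of $K/\mathbb{Q}$ is exactly what makes this possible.

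I would first write $\Res_{s=1}\zeta_K(s)$ in terms of the completed Dedekind zeta function $\zeta_K^*(s) = \Delta_K^{s/2}\Gamma_{\mathbb{R}}(s)^{r_1}\Gamma_{\mathbb{C}}(s)^{r_2}\zeta_K(s)$. The function $\xi_K(s) := s(s-1)\zeta_K^*(s)$ is entire of order one and admits a Hadamard factorization $\xi_K(s) = e^{A+Bs}\prod_{\rho}(1-s/\rho)e^{s/\rho}$. Taking the residue at $s=1$ and rearranging yields an identity expressing $\Res_{s=1}\zeta_K(s)$ as the product of $|\Delta_K|^{-1/2}$, an inverse gamma-factor contribution of size at most $e^{O(\deg K)}$, and a zero-product of the form $\prod_{\rho}|1-\rho|$ over zeros close to $s=1$. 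Standard zero-counting estimates on the critical strip show that the only genuinely harmful loss comes from zeros $\rho = \beta$ exceptionally close to $s=1$, i.e., from a potential Siegel zero.

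The main obstacle, and the heart of Stark's argument, is therefore an effective lower bound on $1-\beta$ for such a hypothetical real zero $\beta\in(0,1)$ of $\zeta_K(s)$. Here the normality of $K/\mathbb{Q}$ enters through the Aramata--Brauer theorem: $\zeta_K(s)/\zeta(s)$ is entire, and, further, $\log(\zeta_K(s)/\zeta(s))$ has nonnegative Dirichlet coefficients (Aramata's identity for normal extensions, via Brauer induction expressing the quotient as a positive rational combination of abelian $L$-function logarithms). Since $\zeta(s)$ has no real zero near $s=1$, any Siegel zero $\beta$ of $\zeta_K$ must also be a zero of $\zeta_K/\zeta$. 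Stark then evaluates $\zeta_K(s)/\zeta(s)$ at a real $\sigma$ slightly less than $1$, combines the positivity-based lower bound there with an upper bound coming from a contour-shift to $\Re(s)=\tfrac12$ (where the functional equation introduces only a factor of $|\Delta_K|^{1/2}$), and balances the two. This yields the effective estimate $1-\beta \gg 1/(\deg K \cdot |\Delta_K|^{1/\deg K})$ with an absolute implied constant.

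Plugging this lower bound on $1-\beta$ back into the Hadamard-product expression for the residue, and absorbing the gamma-factor loss $e^{O(\deg K)}$ into the $\deg K$ factor on the right-hand side of (\ref{eq:stark_equation}), yields the claimed inequality. The hard part is unambiguously the effective lower bound on $1-\beta$: the classical Siegel argument gives only ineffective constants, and Stark's innovation is to extract fully effective ones exactly when $\zeta_K/\zeta$ is entire with positive log-coefficients, i.e.\ for $K/\mathbb{Q}$ normal.
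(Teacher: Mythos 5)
The paper does not actually supply a proof of this theorem: it simply cites Stark's 1974 paper \cite{S1974} as an imported input (and points to \cite{GL22} for a numerical value of $c_1$). So there is no paper proof to compare against. Your high-level arc is right — the obstruction is a possible Siegel zero $\beta$ of $\zeta_K$, the Hadamard factorization turns a lower bound on $1-\beta$ into a lower bound on the residue, and normality of $K/\mathbb{Q}$ is what makes the zero-free region effective — and that much does match Stark's strategy.

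However, the central positivity claim in your sketch is false: $\log\bigl(\zeta_K(s)/\zeta(s)\bigr)$ does \emph{not} have nonnegative Dirichlet coefficients for $K/\mathbb{Q}$ Galois. Already for $K$ quadratic, $\zeta_K/\zeta = L(s,\chi)$ with $\chi$ real, and $\log L(s,\chi)$ has coefficient $\chi(p) = -1$ at every inert prime. More generally, by orthogonality of characters of $G = \Gal(K/\mathbb{Q})$, the $p^m$-coefficient of $\log(\zeta_K/\zeta)$ equals $\tfrac{1}{m}\bigl(|G|\cdot\mathbf{1}[\mathrm{Frob}_p^m = e] - 1\bigr)$, which is $-1/m$ for most $(p,m)$. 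Had the claim been true, Landau's theorem together with the Aramata--Brauer entireness of $\zeta_K/\zeta$ would already exclude any real zero of $\zeta_K/\zeta$, i.e.\ any Siegel zero of $\zeta_K$ — something that is not known. Stark's actual route is different: normality is used to show that a Siegel zero $\beta$ of $\zeta_K$ must also be a zero of $\zeta_k$ for some \emph{quadratic} subfield $k\subseteq K$, at which point the classical effective bound $1-\beta \gg |\Delta_k|^{-1/2}$ for quadratic fields (ultimately from $h(k)\geq 1$ and the class number formula) applies; combined with $|\Delta_k|\leq|\Delta_K|^{2/\deg K}$ from the discriminant tower formula, this gives $1-\beta\gg|\Delta_K|^{-1/\deg K}$, which is what you need. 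Entireness of $\zeta_K/\zeta$ is used in localizing the zero to a quadratic subfield, not through a coefficient-positivity argument for the quotient itself.
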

In \cite{GL22}, the authors report that $c_1 = 0.001448$ seems to work for Equation (\ref{eq:stark_equation}). 
We then simply combine \cref{th:stark} with the estimate in \cref{hy:disc} to get Hypothesis \ref{hy:residue_estimate} for any $c>2$.

\end{enumerate}

Since the hypotheses of~\cref{th:general} are satisfied by cyclotomic number fields, we can conclude the following.

\begin{corollary}
   ~\cref{th:main} holds.
\end{corollary}

\end{proof}

\bibliographystyle{amsalpha} 
\bibliography{auth}
  
\bigskip
  \footnotesize

  N.~Gargava, \textsc{Université Paris-Saclay, France}\par\nopagebreak
  \textit{E-mail address}:  \texttt{nihar.gargava@universite-paris-saclay.fr}

  \medskip

  M.~Viazovska, \textsc{École Polytechnique Fédérale de Lausanne,
    Vaud, Switzerland}\par\nopagebreak
  \textit{E-mail address}:  \texttt{maryna.viazovska@epfl.ch}

\end{document}